\def\MRbibitem{\@ifnextchar[\my@lbibitem\my@bibitem}
\def\mybiblabel#1#2{\@biblabel{{\hyperref{http://www.ams.org/mathscinet-getitem?mr=#1}{}{}{#2}}}}
\def\myhyperanchor#1{\Hy@raisedlink{\hyper@anchorstart{cite.#1}\hyper@anchorend}}
\def\my@lbibitem[#1]#2#3#4\par{%
  \item[\mybiblabel{#2}{#1}\myhyperanchor{#3}\hfill]#4%
  \@ifundefined{ifbackrefparscan}{}{\BR@backref{#3}}%
  \if@filesw{\let\protect\noexpand\immediate
    \write\@auxout{\string\bibcite{#3}{#1}}}\fi\ignorespaces%
}
\def\my@bibitem#1#2#3\par{%
  \refstepcounter\@listctr
  \item[\mybiblabel{#1}{\the\value\@listctr}\myhyperanchor{#2}\hfill]#3%
  \@ifundefined{ifbackrefparscan}{}{\BR@backref{#2}}%
  \if@filesw\immediate\write\@auxout
    {\string\bibcite{#2}{\the\value\@listctr}}\fi\ignorespaces%
}
\DeclareFontFamily{U} {MnSymbolA}{}
\DeclareFontShape{U}{MnSymbolA}{m}{n}{
   <-6> MnSymbolA5
   <6-7> MnSymbolA6
   <7-8> MnSymbolA7
   <8-9> MnSymbolA8
   <9-10> MnSymbolA9
   <10-12> MnSymbolA10
   <12-> MnSymbolA12}{}
\DeclareFontShape{U}{MnSymbolA}{b}{n}{
   <-6> MnSymbolA-Bold5
   <6-7> MnSymbolA-Bold6
   <7-8> MnSymbolA-Bold7
   <8-9> MnSymbolA-Bold8
   <9-10> MnSymbolA-Bold9
   <10-12> MnSymbolA-Bold10
   <12-> MnSymbolA-Bold12}{}
\DeclareSymbolFont{MnSyA} {U} {MnSymbolA}{m}{n}
 \DeclareFontFamily{U} {MnSymbolC}{}
\DeclareFontShape{U}{MnSymbolC}{m}{n}{
  <-6> MnSymbolC5
  <6-7> MnSymbolC6
  <7-8> MnSymbolC7
  <8-9> MnSymbolC8
  <9-10> MnSymbolC9
  <10-12> MnSymbolC10
  <12-> MnSymbolC12}{}
\DeclareFontShape{U}{MnSymbolC}{b}{n}{
  <-6> MnSymbolC-Bold5
  <6-7> MnSymbolC-Bold6
  <7-8> MnSymbolC-Bold7
  <8-9> MnSymbolC-Bold8
  <9-10> MnSymbolC-Bold9
  <10-12> MnSymbolC-Bold10
  <12-> MnSymbolC-Bold12}{}
\DeclareSymbolFont{MnSyC} {U} {MnSymbolC}{m}{n}
\DeclareMathSymbol{\top}{\mathord}{MnSyA}{219} 
\DeclareMathSymbol{\plus}{\mathord}{MnSyC}{20} 
\declaretheorem[numberwithin=section]{theorem}
\declaretheorem[sibling=theorem]{lemma}
\declaretheorem[sibling=theorem]{corollary}
\declaretheorem[sibling=theorem]{proposition}
\declaretheorem[sibling=theorem,style=definition]{definition}
\declaretheorem[sibling=theorem]{remark}
\declaretheorem[sibling=theorem]{example}
\numberwithin{equation}{section}     
\setlist[enumerate,1]{label={\upshape(\alph*)},ref=\alph*}
\setlist[enumerate,2]{label={\upshape(\arabic*)},ref=\arabic*}
\newcommand{\R}{\mathbb{R}}
\newcommand{\N}{\mathbb{N}}
\def\phi{\varphi}
\def\R{{\mathbb R}}
\def\N{{\mathbb N}}
\def\sm{\setminus}
\newcommand{\vertiii}[1]{{\left\vert\kern-0.25ex\left\vert\kern-0.25ex\left\vert #1 
    \right\vert\kern-0.25ex\right\vert\kern-0.25ex\right\vert}}
\newcommand{\invertiii}[1]{{\vert\kern-0.25ex\vert\kern-0.25ex\vert #1 
    \vert\kern-0.25ex\vert\kern-0.25ex\vert}}
\begin{document}

\title{The Lyapunov spectrum as the Newton-Raphson method for countable Markov interval maps}

\date{\today}


\subjclass[2020]{Primary: 37A10; Secondary:37D35, 37H99}
\keywords{Lyapunov spectrum, thermodynamic formalism, Newton-Raphson method.}

\author{ Nicol\'as Ar\'evalo H.}	
\address{Facultad de Matem\'aticas,
Pontificia Universidad Cat\'olica de Chile (UC), Avenida Vicu\~na Mackenna 4860, Santiago, Chile}
\email{\href{narevalo1@mat.uc.cl}{narevalo1@mat.uc.cl}}

\begin{abstract}
We consider MRL maps (Markov-Renyi-Lüroth), a class of interval maps with infinitely many branches that can have parabolic fixed points. We prove that for every MRL map $T$, the Lyapunov spectrum can be expressed in terms of the Legendre transform of the topological pressure of $-t\log|T'|$, generalizing previous results in the area. We also show that the Lyapunov spectrum coincides with a function directly related to the Newton-Raphson method applied to the topological pressure of $-t\log|T'|$.\end{abstract}

\maketitle

\section{Introduction}
Let $\{I_{n}\}_{n\in \N}$ be a countable collection of intervals with disjoint interiors in $[0,1]$. Let $I=\bigcup_{n\in \N}I_{n}$ and $T:I\rightarrow [0,1]$ be a function differentiable in each $I_{n}$. The \emph{Lyapunov exponent} of $T$ is defined for each $x\in I$ by $$\lambda(x):=\lim_{n\rightarrow \infty}\dfrac{1}{n}\log |(T^{n})'(x)|,$$ whenever the limit exists. It measures the exponential rate of divergence of infinitesimally close orbits. It is one of the fundamental quantities used to describe a dynamical system whose transformation is locally differentiable. A consequence of Birkhoff's ergodic theorem is that if $\mu$ is an ergodic $T$-invariant measure, then the Lyapunov exponent is constant for $\mu$-almost every point. However, since different ergodic measures are mutually singular, they may yield different values for the Lyapunov exponent. For every $\alpha\in  \R$, we define the set \begin{align*}
  J_{\alpha}:=\left\{x\in I: \lambda(x)=\alpha\right\},
\end{align*} whenever the set is non-empty. The sets $\{J_{\alpha}\}_{\alpha\in \R}$ and the collection of points for which the Lyapunov exponent is not defined induce a decomposition of the repeller of $T$ (see Definition \ref{Def31}). We define the Lyapunov spectrum to study this decomposition. For every $\alpha\in\R$ such that $J_{\alpha} \neq \emptyset$,  the \emph{Lyapunov spectrum} of $\alpha$ is defined by
\begin{align*}
    L(\alpha):=\text{Dim}_{H}(J_{\alpha}),
\end{align*} where $\text{Dim}_{H}$ denotes the Hausdorff dimension (see Definition \ref{HausdorffDim}). \\

The systematic study of the Lyapunov spectrum began with the 1999 work of Weiss \cite{we}. Using tools from his joint work with Pesin \cite{yh}, he proved that for conformal expanding maps and Axiom A diffeomorphism, the function $L$ has a bounded domain and is real analytic. This is a remarkable result since each level set is dense in the repeller. Thus, even though the decomposition into level sets is extremely complicated, the function that encodes it is as regular as possible. The convexity properties of the Lyapunov spectrum have been subject of recent interest \cite{ik, jpv, m}. The study of Lyapunov spectrum has also been carried out in realm of non-uniform hyperbolicity. Indeed,  
Pollicott and  Weiss \cite{mh}, and Nakaishi  \cite{na} obtained results describing the Lyapunov spectrum for the Manneville Pomeau map. More generally,  Gelfert and Rams \cite{gr} in 2009 studied a fairly general family of interval maps with finitely many branches having parabolic fixed points. In these works it is shown that $L$ is still defined on a bounded interval but it can have points where it is not differentiable. The Lyapunov spectrum has also been studied for dynamical systems defined  over non-compact spaces. Indeed, the Lyapunov spectrum of the Gauss map has been thoroughly studied. Starting with the work of  Pollicott and Weiss \cite{mh} and completed later by  Kesseb{\"o}hmer and Stratmann \cite{Ks} and by Fan et al. \cite{flww}. They showed that, in this setting, the function $L$ is real analytic and it is defined on an unbounded interval. Finally, in 2010, Iommi \cite{io} studied a class of interval maps with countably many branches having parabolic fixed points. He showed that  $L$ is defined on an unbounded interval and could have points where it is not differentiable.\\

In all the above mentioned cases, a relation between the Lyapunov spectrum with the Legendre transform of the topological pressure $P$ for the potential $-t\log{|T'|}$ is established (see Definitions \ref{Legendre transform } and \ref{TopologicalP} respectively). More precisely,  for all $\alpha\neq 0$ such that $J_{\alpha}\neq \emptyset $, the following holds:  \begin{align}\label{QINTRO}    L(\alpha)=\frac{1}{\alpha}\inf_{t\in \R}(P(-t\log|T'|)+t\alpha).\end{align}
In this article, we extend this result so as to include a class of interval maps having countably many branches and, possibly, parabolic fixed points, the so-called MRL maps (see Theorem \ref{THM 5.1}). Among MRL maps, there are functions $T$ with parabolic fixed points and whose topological pressure with respect to $-t\log|T'|$ exhibits new behaviour. Indeed, there exists $0\leq t_{\infty}<\text{Dim}_{H}(\Lambda)$ where $\Lambda$ denotes the repeller of $T$ (see Definition \ref{Def31}), such that the pressure is infinity for $t<t_{\infty}$, is equal to zero for $t\geq \text{Dim}_{H}(\Lambda)$, is real analytic in $(t_{\infty},\text{Dim}_{H}(\Lambda))$ and the lateral limit of the pressure at $t_{\infty}$ is finite. Examples exhibiting this behaviour are provided (see Example \ref{Example 4.6}). As far as we know, these types of examples are new. Also for MRL maps, in Theorem \ref{Thm 1} we describe the properties of the topological pressure of $-t\log|T'|$ and its thermodynamic formalism, generalizing previous results of Markov interval maps.

In 2010, Iommi \cite{io2} noted that for a class of uniformly expanding interval maps with finitely many branches, the corresponding Lyapunov spectrum is nothing but the Newton map given by the Newton-Raphson method applied to the topological pressure of $-t\log|T'|$ as a function of $t\in \R$. In this article we generalize this result for MRL maps. For that, we define the S-Newton-Raphson map as a generalization of the Newton map (see Definition \ref{S-Newton}). Let $Dom(L)=\{\alpha\in \R: J_{\alpha}\neq \emptyset\}$, i.e., $Dom(L)$ denotes the domain of $L$. We prove the following. 
\begin{theorem}\label{PrincipalT}
 Let T be an MRL map. Then for every $\alpha\in Dom(L)$
 \begin{align*}
    L(\alpha)=
    Ns_{P}(\alpha),
\end{align*}where $Ns_{P}$ is the S-Newton-Raphson map applied to the topological pressure of $-t\log|T'|$.
\end{theorem}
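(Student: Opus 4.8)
The plan is to derive the identity from the Legendre-transform description of $L$ in Theorem~\ref{THM 5.1}, the structural properties of the pressure function $P(t):=P(-t\log|T'|)$ from Theorem~\ref{Thm 1}, and the definition of the S-Newton-Raphson map in Definition~\ref{S-Newton}. Write $t_D:=\text{Dim}_H(\Lambda)$. I will use the properties of $P$ recorded in Theorem~\ref{Thm 1}: $P$ is non-increasing, convex and lower semicontinuous, it vanishes on $[t_D,+\infty)$, it is real-analytic and strictly convex on the interval where it is finite and strictly positive, whose left endpoint $t_\infty$ (below which $P\equiv+\infty$) satisfies $t_\infty<t_D$, and it has a lateral limit $P_\infty:=\lim_{s\downarrow t_\infty}P(s)\in(0,+\infty]$ at $t_\infty$. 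Put $\alpha_0:=-P'(t_D^-)\ge 0$ and $\alpha_1:=-P'(t_\infty^+)\in(\alpha_0,+\infty]$.

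First I would handle $\alpha\in Dom(L)$ with $\alpha\ne 0$, so $\alpha>0$ (recall that MRL maps are expanding away from parabolic points, hence $Dom(L)\subseteq[0,+\infty)$). The convex, lower semicontinuous function $\varphi_\alpha(t):=P(t)+\alpha t$ is coercive (it is $+\infty$ on $(-\infty,t_\infty)$ and tends to $+\infty$ as $t\to+\infty$), so it attains its infimum at some $t^\ast=t^\ast(\alpha)\in[t_\infty,t_D]$, characterized by $-\alpha\in\partial P(t^\ast)$. The affine function $s\mapsto-\alpha s+\inf_t\varphi_\alpha(t)$ is then the unique line of slope $-\alpha$ supporting the graph of $P$: it lies weakly below the graph and meets it at $t^\ast$. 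Its unique zero is $\tfrac{1}{\alpha}\inf_{t}\bigl(P(t)+\alpha t\bigr)$, which by Theorem~\ref{THM 5.1} equals $L(\alpha)$. So it remains to identify the zero of this supporting line with $Ns_P(\alpha)$.

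This identification I would carry out by cases, according to where $t^\ast(\alpha)$ sits. If $\alpha_0<\alpha<\alpha_1$, then $t^\ast\in(t_\infty,t_D)$, $P$ is differentiable at $t^\ast$ with $P'(t^\ast)=-\alpha$, the supporting line is the tangent to the graph at $t^\ast$, and its zero is the Newton iterate $t^\ast-P(t^\ast)/P'(t^\ast)$; this is exactly the ``interior'' value of $Ns_P(\alpha)$ in Definition~\ref{S-Newton}. If $0<\alpha\le\alpha_0$, then $t^\ast=t_D$, the supporting line meets the graph at $(t_D,0)$, its zero is $t_D$, and this is the value that Definition~\ref{S-Newton} assigns to $Ns_P(\alpha)$ via a Newton step at the phase-transition point $t_D$ (which returns $t_D$ because $P(t_D)=0$). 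If $\alpha_1<+\infty$ and $\alpha\ge\alpha_1$, then $t^\ast=t_\infty$, the supporting line meets the graph at $(t_\infty,P_\infty)$ and has zero $t_\infty+P_\infty/\alpha$, matching the ``boundary at $t_\infty$'' value of $Ns_P(\alpha)$. Together these three cases give $L(\alpha)=Ns_P(\alpha)$ for every $\alpha\in Dom(L)\setminus\{0\}$.

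It remains to treat $\alpha=0$ when $0\in Dom(L)$ (the parabolic case). Here one can argue by continuity — $L$ extends continuously to the endpoint $\alpha=0$ and $Ns_P$ is continuous at $0$ by Definition~\ref{S-Newton}, so passing to the limit $\alpha\downarrow 0$ in the identity just proved gives $L(0)=\lim_{\alpha\downarrow 0}L(\alpha)=\lim_{\alpha\downarrow 0}Ns_P(\alpha)=Ns_P(0)$ — or, alternatively, compute $\text{Dim}_H(J_0)$ directly; in either case both sides equal $t_D=\text{Dim}_H(\Lambda)$, the root of $P$. The main difficulty is not a single delicate estimate but bookkeeping: one must make the dictionary between Definition~\ref{S-Newton} and the supporting-line picture watertight in every regime, paying attention to the degenerate configurations that MRL maps allow ($t_\infty=0$, $P_\infty$ finite versus infinite, $\alpha_0=0$, or $\alpha_1=+\infty$), in which some of the cases above merge or become vacuous, and one must check that the description of $Dom(L)$ furnished by Theorem~\ref{THM 5.1} is precisely the set on which the various branches of the S-Newton-Raphson map are defined.
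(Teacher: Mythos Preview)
Your strategy is exactly the paper's: combine Theorem~\ref{THM 5.1} (recorded as Corollary~\ref{Cor53}, $L(\alpha)=\tfrac{1}{\alpha}F(\alpha)$) with the slide-function identity $F(\alpha)=\alpha\,Ns_{P}(\alpha)$ (Corollary~\ref{Lemma2}); the paper simply quotes these two corollaries, while you reprove the second one inline by locating the minimizer $t^{\ast}(\alpha)$ of $P(t)+\alpha t$.

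There is, however, a genuine slip in the non-parabolic case. You assert that $P$ vanishes on $[t_{D},\infty)$ and that the minimizer satisfies $t^{\ast}\in[t_{\infty},t_{D}]$, but for a non-parabolic slide function (e.g.\ the Gauss map) $P$ is strictly decreasing and negative past $t_{D}$, so $t^{\ast}$ can exceed $t_{D}$. Consequently your threshold $\alpha_{0}:=-P'(t_{D}^{-})$ is not the right one: the correct cut is $-b_{P}$ with $b_{P}$ as in \eqref{Qlim}, and these differ for non-parabolic $P$. With your choice, for $\alpha\in(-b_{P},-P'(t_{D}))$ your case~2 wrongly pins the supporting line at $(t_{D},0)$ and outputs $t_{D}$; in fact the supporting line is the tangent at some $t_{\alpha}>t_{D}$ and both $L(\alpha)$ and $Ns_{P}(\alpha)$ equal $t_{\alpha}+P(t_{\alpha})/\alpha$, via clause~(i) of Definition~\ref{S-Newton}. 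The fix is immediate: set $\alpha_{0}:=-b_{P}$; then your case~2 becomes vacuous for non-parabolic $P$ (matching the hypothesis of Definition~\ref{S-Newton}(iii)), and case~1 absorbs every $\alpha$ with $-\alpha\in(a_{P},b_{P})$, including those with $t_{\alpha}>t_{D}$. A smaller point: $Ns_{P}(0)$ is not literally defined in Definition~\ref{S-Newton}, so your continuity argument at $\alpha=0$ tacitly extends the definition by $Ns_{P}(0):=d$; the paper glosses over this too.
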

As in the case studied in \cite{io2}, the techniques used in the proof of Theorem \ref{PrincipalT} stem from thermodynamic formalism. In particular, in a detailed study of the topological pressure map $t \mapsto P(-t\log |T'|)$. However, contrary to what happens in \cite{io2}, the pressure function can have points where it is not defined or where it is not differentiable, the so-called phase transitions. Moreover, the  Newton-Raphson method needs to be adapted to this not necessarily finite nor differentiable setting.\\

In summary, the difficulties that need to be addressed are the non-uniform hyperbolicity of the map $T$ (it can have parabolic fixed points) and the fact that it is defined in a non-compact space (creating a great deal of convergence difficulties). Both of these properties reflect on the regularity of the pressure function and yield complications implementing the Newton-Raphson method. In section 2, we define Slide maps. These functions include pressure maps $t\mapsto P(-t\log|T'|)$ of MRL maps. We define the Legendre transforms and a generalization of the Newton-Raphson method for slide maps. Section 3 defines MRL maps, we present several examples. In section 4, we study the thermodynamical formalism of MRL maps. Finally, in section 5, we present the proof of our main result.

\section{The Newton-Raphson method and the Legendre transform}

In this section we define the slide functions, these are a family of real-valued functions which share some properties with the topological pressure of countable Markov interval maps. We also define the Legendre transform and the S-Newton-Raphson map for slide functions. Moreover, at the end of this section, an essential relation between the Legendre transform and the S-Newton-Raphson map will be established. 
\begin{definition}\label{Defslide}
Let $t_{\infty}$ and $d$ be non-negative real numbers such that $t_{\infty}<d$. A non-increasing function $f:\R \rightarrow \R \cup \{\infty\}$ is said to be \textit{slide} if $f(t)=\infty$ whenever $t<t_{\infty}$, has a zero at $t=d$, and, either
\begin{itemize}
    \item the function $f$ is $C^{2}$, strictly decreasing and strictly convex in the interval $(t_{\infty},d)$, and $f(t)=0$ whenever $t\geq d$ in which case we will call it a \textit{parabolic slide function}, or
    \item the function $f$ is $C^{2}$, strictly decreasing and strictly convex in the interval $(t_{\infty},\infty)$, in which case we will call it a \textit{non-parabolic slide function}.
\end{itemize}We say that a slide function $f$ is of \textit{continuous type} whenever $\lim_{t\rightarrow t^{+}_{\infty}}f(t)=\infty$, or, of \textit{discontinuous type} whenever $\lim_{t\rightarrow t^{+}_{\infty}}f(t)<\infty$. Let  $a_{f}:=\lim_{t\rightarrow t_{\infty}^{+}}f'(t)$ and 
\begin{align}\label{Qlim}
    b_{f}:=\begin{cases}\lim_{t\rightarrow d^{-}}f'(t) & \text{if $f$ is a parabolic slide function.}\\
    \lim_{t\rightarrow \infty} f'(t) & \text{if $f$ is a non-parabolic slide function.} \end{cases}
\end{align}      
\end{definition}
 Figure \ref{fig:my_label2} shows the graphs of different classes of slide functions.
 
\begin{figure}
    \centering
    \includegraphics[width=0.7\linewidth]{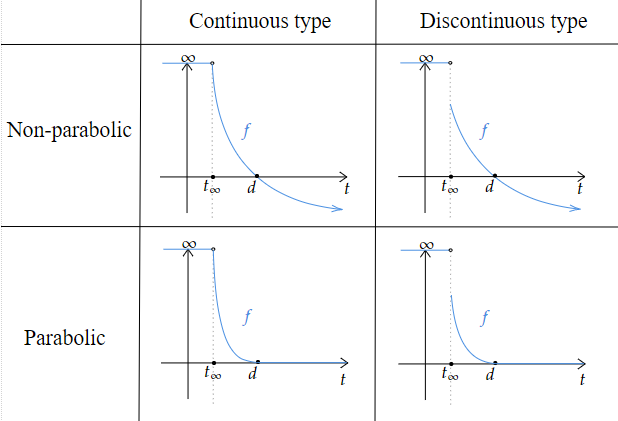}
    \caption{Types of slide functions.}
    \label{fig:my_label2}
\end{figure}

\subsection{The Newton-Raphson method}\hfill \\
The Newton map of a slide function $f$ is defined for every $t\geq  t_{\infty}$ by $N_{f}(t):=t-\frac{f(t)}{f'(t)}$ whenever $f'(t)\neq 0$. This is a root estimation of $f$ given by the line tangent to its graph at $(t,f(t))$. Also, when it exists, the orbit of every point $t$ under the Newton map converges quadratically to the root of $f$. This algorithmic method that approximates the roots of a real-valued differentiable function is known as the Newton-Raphson method.\\ \newline 
There are two problems in applying the Newton-Raphson method to slide functions: when $t=t_{\infty}$ and when $t=d$. In the first case, whenever $f$ is of discontinuous type and $a_{f}>-\infty$, we can have different linear approximations by tangent lines with slopes less than $a_{f}$. Similarly, if $t=d$, $f$ is a parabolic slide function and $b_{f}<0$, then $f$ is not differentiable at $d$. For these two problems, we will consider a generalization of the Newton map.
\begin{definition}\label{DefSupp}
     Let $\alpha$ and $c$ be real numbers. We say that a line $L(t)=\alpha t+c$ is a \textit{support line} of $f$ whenever the graph of $L$ lies below the graph of $f$, i.e., for all $t\in \R$ we have $L(t)\leq f(t)$.
\end{definition}
Adding a bit of notation, when $f$ is a slide function of discontinuous type, we will say that a line is tangent to the graph of $f$ at $(t_{\infty},\lim_{t\rightarrow t_{\infty}^{+}}f(t))$ if it contains this point and does not intersect any other point of the graph of $f$.
\begin{proposition}\label{Support}
     Let $f$ be a slide function and $\alpha\in \R$.
     \begin{itemize}
         \item [(i)] If $-\alpha<b_{f}$, then there exists a support line $L_{\alpha}$ with slope $-\alpha$ tangent to the graph of $f$.
         \item [(ii)] If $f$ is a parabolic slide function and $b_{f}\leq -\alpha<0$, then there exists a support line $L_{\alpha}$ with slope $-\alpha$ tangent to the graph of $f$.
     \end{itemize}
\end{proposition}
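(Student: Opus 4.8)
The plan is to use that, on its interval of smoothness $\cI$ (namely $(t_\infty,d)$ in the parabolic case and $(t_\infty,\infty)$ in the non-parabolic case), a slide function $f$ is $C^2$ and strictly convex, so that $f'$ is a continuous strictly increasing bijection from $\cI$ onto the open interval $(a_f,b_f)$; note $a_f<b_f$ since $f'$ is strictly increasing on the non-degenerate interval $\cI$. The desired support line of slope $-\alpha$ will be produced either as the ordinary tangent line at the interior point of $\cI$ where $f'$ takes the value $-\alpha$, or --- when no such point exists --- as a line through one of the two ``endpoint'' points $(t_\infty,\lim_{t\to t_\infty^+}f(t))$ or $(d,0)$ of the graph. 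Observe at the outset that in both (i) and (ii) one has $\alpha>0$: indeed $b_f\le 0$ always (on $\cI$ one has $f'<0$ and $f'$ increasing), and (i) assumes $-\alpha<b_f\le 0$ while (ii) assumes $b_f\le-\alpha<0$. This positivity is precisely what forces the constructed line below the flat piece $[d,\infty)$ when $f$ is parabolic.

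For (i), distinguish $-\alpha>a_f$ from $-\alpha\le a_f$. If $-\alpha\in(a_f,b_f)$, let $t^*\in\cI$ be the unique point with $f'(t^*)=-\alpha$ and put $L_\alpha(t)=f(t^*)-\alpha(t-t^*)$. On $\cI$ strict convexity gives $L_\alpha\le f$ with equality only at $t^*$; on $(-\infty,t_\infty)$ trivially $L_\alpha\le\infty=f$; and if $f$ is parabolic then on $[d,\infty)$ one uses $L_\alpha(d)\le\lim_{t\to d^-}f(t)=0$ together with $\alpha>0$ (so $L_\alpha$ is strictly decreasing) to conclude $L_\alpha<0=f$ past $d$, so the only contact point remains $t^*$. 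If instead $-\alpha\le a_f$, then $a_f$ is finite, and a slide function with finite $a_f$ must be of discontinuous type: integrating $f'\ge a_f$ near $t_\infty$ shows $f$ is bounded there, hence $\lim_{t\to t_\infty^+}f(t)=:c_\infty<\infty$. Take $L_\alpha(t)=c_\infty-\alpha(t-t_\infty)$; for $t>t_\infty$ the fundamental theorem of calculus and $f'(s)>a_f\ge-\alpha$ give $f(t)-c_\infty=\int_{t_\infty}^t f'(s)\,ds>-\alpha(t-t_\infty)$, so $f(t)>L_\alpha(t)$, while for $t<t_\infty$ and (in the parabolic case) on $[d,\infty)$ one argues as before. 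Thus $L_\alpha$ is a support line meeting the graph only at the endpoint point $(t_\infty,c_\infty)$, i.e.\ tangent there in the sense fixed just before the statement.

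For (ii), since now $-\alpha\in[b_f,0)$ lies outside $(a_f,b_f)$, no point of $\cI$ has derivative $-\alpha$, and tangency is forced to the corner of the graph at $d$ (left slope $b_f$, right slope $0$). Put $L_\alpha(t)=-\alpha(t-d)$. On $(d,\infty)$ we get $L_\alpha(t)<0=f(t)$ because $\alpha>0$; on $(t_\infty,d)$, writing $f(t)=\lim_{r\to d^-}f(r)+\int_t^d(-f'(s))\,ds\ge\int_t^d(-f'(s))\,ds$ and using $-f'(s)>-b_f\ge\alpha$ we obtain $f(t)>\alpha(d-t)=L_\alpha(t)$; on $(-\infty,t_\infty)$ trivially $L_\alpha\le f$. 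Hence $L_\alpha$ lies below the graph everywhere, touching it exactly at $(d,0)$, so it is a support line with slope $-\alpha$ tangent to the graph at $d$.

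The arguments are elementary once the right line is written down; the points that require care are the bookkeeping on the flat piece $[d,\infty)$ of a parabolic slide function, where both $\alpha>0$ and the continuity $\lim_{t\to d^-}f(t)=0$ are used, and the observation in (i) that the only obstruction to an interior tangency on the left is being of discontinuous type with finite $a_f$ --- which is exactly the degenerate situation flagged in the discussion preceding the proposition. I do not expect any deeper analytic difficulty beyond these case distinctions and the convexity estimates.
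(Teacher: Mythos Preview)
Your proof is correct and follows essentially the same approach as the paper's: in each case you write down the same candidate line (the ordinary tangent at $t^*$ when $-\alpha\in(a_f,b_f)$, the line through $(t_\infty,c_\infty)$ when $-\alpha\le a_f$, and the line through $(d,0)$ in (ii)) and verify the support property. Your treatment is in fact more careful than the paper's in two respects: you explicitly check the inequality $L_\alpha\le f$ on the flat piece $[d,\infty)$ using $\alpha>0$, and you justify (rather than assume) that finiteness of $a_f$ forces $f$ to be of discontinuous type, which is exactly what makes the endpoint line in (i) well defined.
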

\begin{proof}
    Let $\alpha\in \R$ be such that $-\alpha\in (a_{f},b_{f})$. Since $f'$ is strictly increasing, it is injective. Then, there is a unique $t_{\alpha}>t_{\infty}$ such that $f'(t_{\alpha})=-\alpha$. Thus, the line $$L(t)=-\alpha(t-t_{\alpha})+f(t_{\alpha})$$ is tangent to the graph of $f$, and due to the convexity of $f$, it is a support line. 
    
    Let $f$ be a slide function of discontinuous type with $a_{f}>-\infty$. Let $\alpha\in \R$ be such that $-\alpha\leq a_{f}$. On one hand, for all $t< t_{\infty}$ we have $f(t)=\infty$, and, on the other hand $f'(t)>-\alpha$ whenever $t>t_{\infty}$. Thus, by the convexity of $f$ we have that the line $$L(t)=-\alpha(t-t_{\infty})+\lim_{t\rightarrow t_{\infty}^{+}}f(t)$$ is a support line tangent to the graph of $f$ at $(t_{\infty},\lim_{t\rightarrow t_{\infty}^{+}}f(t))$ since the slope is less than the subsequent decrease of the function. Finally, let $f$ be a parabolic slide function with $b_{f}<0$. Let $\alpha\in \R$ be such that $b_{f}\leq -\alpha<0$. By the convexity of $f$, since for all $t\geq d$ we have $f(t)=0$, and for all $t\in(t_{\infty},d)$ we get $f'(t)<-\alpha$, the line $L(t)=-\alpha(t-d)$ is a support line tangent to the graph of $f$ at $(d,0)$.
\end{proof}
\begin{definition}\label{S-Newton}
    Let $f$ be a slide function and $\alpha\in \R$.
     \begin{itemize}
         \item [(i)] If $-\alpha\in (a_{f},b_{f})$, then we define the \textit{S-Newton-Raphson} map of $\alpha$ by
     \begin{align*}
     Ns_{f}(\alpha):=t_{\alpha}+\dfrac{f(t_{\alpha})}{\alpha},
     \end{align*} where $t_{\alpha}$ is such that $f'(t_{\alpha})=-\alpha$.
         \item [(ii)]  If $f$ is a slide function of discontinuous type and $-\infty<-\alpha \leq a_{f}$, then we define the \textit{S-Newton-Raphson} map of $\alpha$ by 
         \begin{align*}   Ns_{f}(\alpha):=t_{\infty}+\dfrac{ \lim_{t\rightarrow t_{\infty}^{+}}f(t)}{\alpha} .
         \end{align*}
         \item [(iii)] If $f$ is a parabolic slide function and $b_{f}\leq -\alpha<0$, then we define the \textit{S-Newton-Raphson} map of $\alpha$ by $Ns_{f}(\alpha):=d$.
     \end{itemize}
\end{definition}
    Note that as a consequence of Proposition \ref{Support}, the image of the S-Newton-Raphson map is nothing more than the intersection with the $x$-axis of a support line tangent to the graph of $f$ with slope $-\alpha$. Thus,  with this definition, we can avoid the problems the Newton-Raphson method has at $t=t_{\infty}$ and $t=d$. Also, if $\alpha$ is such that $-\alpha\in (a_{f},b_{f})$, then \begin{align}\label{GEN}
        Ns_{f}(\alpha)=N_{f}\circ (f')^{-1}(-\alpha).
    \end{align} This equation shows $Ns_{f}$ as a generalization of $N_{f}$ since both coincide when $f$ is differentiable.
\subsection{The Legendre transform}
\begin{definition}\label{Legendre transform }
    Let $f$ be a slide function and $\alpha\in \R$. The \textit{Legendre transform} of $\alpha$ with respect to $f$ is defined as the infimum between the $y$-axis intercept of lines with slope $-\alpha$ and secant to the graph of $f$, i.e., the Legendre transform of $\alpha$ with respect to $f$ is given by $$F(\alpha)=\inf_{t\in \R}\{f(t)+\alpha t \}.$$
\end{definition}
The following proposition will be essential in the proof of Theorem \ref{THM 5.1}.

\begin{proposition}\label{Lemma 1}
Let $f$ be a slide function and $\alpha\in \R$. Then 
\begin{align*}
    F(\alpha)=\begin{cases}
    f(t_{\alpha})+\alpha t_{\alpha} & \text{if $-\alpha \in (a_{f}, b_{f})$.}\\
    \displaystyle\lim_{t\rightarrow t_{\infty}^{+}}f(t)+\alpha t_{\infty} & \text{if $f$ is of discontinuous type with $-\infty<-\alpha\leq a_{f}$.}\\
    \alpha d & \text{if $f$ is a parabolic slide function and  $b_{f}\leq -\alpha\leq 0$.}\\
    \displaystyle\lim_{t\rightarrow \infty} f(t)+\alpha t & \text{if $f$ is a non-parabolic slide function and $b_{f}\leq -\alpha\leq 0$.}\\
    -\infty & \text{if } 0<-\alpha,
    \end{cases}
\end{align*} where $t_{\alpha} \in \R$ is such that $f'(t_{\alpha})=-\alpha$.
\end{proposition}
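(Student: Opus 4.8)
The plan is to evaluate $F(\alpha)=\inf_{t\in\R}\{f(t)+\alpha t\}$ case by case, exploiting the elementary dictionary between the Legendre transform and support lines (Definition \ref{DefSupp}): a line $\ell(t)=-\alpha t+c$ lies below the graph of $f$ if and only if $c\le f(t)+\alpha t$ for every $t$, i.e.\ if and only if $c\le F(\alpha)$. Thus $F(\alpha)$ is precisely the $y$-intercept of the highest support line of slope $-\alpha$, and if some support line of slope $-\alpha$ actually meets the graph of $f$ --- at an interior point $(s,f(s))$, or, in the sense introduced just before Proposition \ref{Support}, at $(t_\infty,\lim_{t\to t_\infty^{+}}f(t))$ --- then its $y$-intercept equals $F(\alpha)$: it is $\le F(\alpha)$ by the dictionary, and $\ge F(\alpha)$ because $F(\alpha)$ is bounded above by $f(s)+\alpha s$ (resp.\ by $\lim_{t\to t_\infty^{+}}(f(t)+\alpha t)$) at the contact point. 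This turns the first three cases into direct appeals to Proposition \ref{Support}.

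I would first handle these support-line cases. If $-\alpha\in(a_f,b_f)$, Proposition \ref{Support}(i) (via its proof) yields a support line of slope $-\alpha$ tangent to the graph at $(t_\alpha,f(t_\alpha))$, where $f'(t_\alpha)=-\alpha$; its $y$-intercept is $f(t_\alpha)+\alpha t_\alpha$, the first line of the statement. If $f$ is of discontinuous type and $-\infty<-\alpha\le a_f$ (so $a_f>-\infty$), the construction in the proof of Proposition \ref{Support} produces a support line of slope $-\alpha$ tangent to the graph at $(t_\infty,\lim_{t\to t_\infty^{+}}f(t))$, with $y$-intercept $\lim_{t\to t_\infty^{+}}f(t)+\alpha t_\infty$, the second line. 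If $f$ is a parabolic slide function and $b_f\le-\alpha<0$, Proposition \ref{Support}(ii) gives the support line $L_\alpha(t)=-\alpha(t-d)$, tangent at $(d,0)$, with $y$-intercept $\alpha d$; and the remaining parabolic subcase $-\alpha=0$ is immediate, since $f\ge 0$ with $f(d)=0$ forces $F(0)=\inf_t f(t)=0=\alpha d$. This accounts for the third line.

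For the non-parabolic case $b_f\le-\alpha\le 0$ I would argue directly. As $f$ is $C^{2}$ and strictly convex on $(t_\infty,\infty)$, $f'$ is strictly increasing with supremum $b_f$, so $f'(t)<b_f\le-\alpha$ for all $t>t_\infty$; hence $g(t):=f(t)+\alpha t$ satisfies $g'(t)=f'(t)+\alpha<0$ on $(t_\infty,\infty)$ and is strictly decreasing there, while $g\equiv\infty$ on $(-\infty,t_\infty)$ and $g(t_\infty)=f(t_\infty)+\alpha t_\infty\ge\lim_{t\to t_\infty^{+}}g(t)$ since $f$ is non-increasing. Therefore $F(\alpha)=\inf_t g(t)=\lim_{t\to\infty}(f(t)+\alpha t)$, the fourth line. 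Finally, if $0<-\alpha$, i.e.\ $\alpha<0$, then $g(t)\to-\infty$ as $t\to\infty$: in the parabolic case $g(t)=\alpha t$ for $t\ge d$, and in the non-parabolic case $f$ non-increasing gives $f(t)\le f(d)=0$, so $g(t)\le\alpha t$, for $t\ge d$; in both cases $F(\alpha)=-\infty$, the last line. To see the five cases are exhaustive one also notes that continuous type forces $a_f=-\infty$ (a bounded $f'$ near $t_\infty$ would force $\lim_{t\to t_\infty^{+}}f(t)$ finite), so the second line is needed exactly when $f$ is of discontinuous type with $a_f$ finite.

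The main obstacle is organizational rather than computational: correctly locating, in each regime, where the infimum of $g=f+\alpha\,\mathrm{id}$ sits --- an interior critical point, the endpoint $d$, the right-hand endpoint $t_\infty$, or $+\infty$ --- and ensuring that the possibly discontinuous values of $f$ at $t_\infty$ and at $d$ never pull the infimum below the asserted value. Routing the first three cases through Proposition \ref{Support} is precisely what spares us from re-examining that boundary behaviour by hand.
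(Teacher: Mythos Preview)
Your proof is correct and is essentially the same argument as the paper's: both reduce to analyzing the monotonicity of $g(t)=f(t)+\alpha t$ via $g'(t)=f'(t)+\alpha$ in each regime. The only organizational difference is that you route the first three cases through Proposition~\ref{Support} (support lines), whereas the paper re-derives those cases directly from the sign of $g'$; the underlying calculus is identical.
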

\begin{proof}For each $\alpha\in \R$ consider the function $\widehat{F}_{\alpha}(t):=f(t)+\alpha t$. Note that the Legendre transform of $\alpha$ is obtained by considering the infimum of $\widehat{F}_{\alpha}$ over the variable $t$. For all $t\in (t_{\infty},\infty)\sm \{d\}$, we have $\widehat{F}_{\alpha}'(t)=f'(t)+\alpha$.\\

Let $-\alpha< b_{f}$. If $-\alpha\in (a_{f},b_{f})$, then by the Mean Value Theorem,  $\widehat{F}_{\alpha}$ attains its minimum at a point $t_{\alpha}> t_{\infty}$. Therefore, $\inf_{t>t_{\infty}}\widehat{F}_{\alpha}(t)=f(t_{\alpha})+\alpha t_{\alpha}$. On the other hand, if we have $ -\infty< -\alpha\leq a_{f}$, then $\widehat{F}_{\alpha}$ is strictly increasing. Thus, $\inf_{t>t_{\infty}}\widehat{F}_{\alpha}(t)=
    \lim_{t\rightarrow t_{\infty}^{+}}f(t)+\alpha t_{\infty} .$\\ \newline
    Now, let $b_{f} \leq -\alpha <0$. Then, for all $t\in (t_{\infty},d)$ we have $\widehat{F}_{\alpha}'(t)< -b_{f}+f'(t)\leq 0$. Suppose $f$ is a parabolic slide function. Therefore, on one hand $\widehat{F}_{\alpha}$ is non-increasing in $(t_{\infty},d)$ and in the other hand $\widehat{F}_{\alpha}(t)=\alpha t$ whenever $t\geq d$. Thus, $$\inf_{t\in \R}\widehat{F}_{\alpha}(t)=\min\{\lim_{t\rightarrow d}\widehat{F}_{\alpha}(t),\inf_{t>d}\alpha t\}=\alpha d.$$ Now suppose $f$ is a non-parabolic slide function, then for all $t>t_{\infty}$ function $\widehat{F}_{\alpha}$ is non-increasing, since $b_{f}\leq -\alpha< 0$. We obtain that $$\inf_{t\in \R}\widehat{F}_{\alpha}(t)=\lim_{t\rightarrow\infty}\widehat{F}_{\alpha}(t).$$ Finally, let $0<-\alpha$. Since $\widehat{F}_{\alpha}'(t)=f'(t)+\alpha<0$, we have that $\widehat{F}_{\alpha}$ has no critical points and is strictly decreasing in $(t_{\infty},\infty)\sm \{d\}$. Thus, $$\inf_{t\in \R}\widehat{F}_{\alpha}(t)=\lim_{t\rightarrow \infty}\widehat{F}_{\alpha}(t)\leq \lim_{t\rightarrow \infty}\alpha t=-\infty.$$
\end{proof}

 Proposition \ref{Lemma 1} allows us to relate the Legendre transform with the S-Newton-Raphson map of slide functions. The following generalizes \cite[Lemma 2.1]{io2}.
\begin{corollary}\label{Lemma2}
Let $f$ be a slide function and $\alpha\in \R$. If either $-\alpha< b_{f}$ or $f$ is a parabolic slide function with $b_{f}\leq -\alpha < 0$, then
\begin{align*}
    F(\alpha)=\alpha Ns_{f}(\alpha).
\end{align*}
\end{corollary}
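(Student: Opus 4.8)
The plan is to read the claimed identity off directly from Proposition~\ref{Lemma 1}, which gives an explicit formula for $F(\alpha)$, together with Definition~\ref{S-Newton}, which gives an explicit formula for $Ns_{f}(\alpha)$: in each of the regimes into which the hypothesis on $-\alpha$ falls, the two formulas differ only by a factor of $\alpha$, so the corollary amounts to a short case check. A preliminary observation makes the division by $\alpha$ harmless: since $f$ is non-increasing and $C^{2}$ on the interval where it is finite and strictly decreasing, one has $f'<0$ there, hence $a_{f}\le 0$ and $b_{f}\le 0$; consequently both $-\alpha<b_{f}$ and $b_{f}\le -\alpha<0$ force $\alpha>0$. (In fact $a_{f}<0$ whenever $a_{f}>-\infty$, since $f'$ is strictly increasing.)

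First I would treat the hypothesis $-\alpha<b_{f}$, splitting it as $-\alpha\in(a_{f},b_{f})$ or $-\alpha\le a_{f}$. If $-\alpha\in(a_{f},b_{f})$, then by Proposition~\ref{Lemma 1} we have $F(\alpha)=f(t_{\alpha})+\alpha t_{\alpha}$ with $f'(t_{\alpha})=-\alpha$, while Definition~\ref{S-Newton}(i) gives $Ns_{f}(\alpha)=t_{\alpha}+f(t_{\alpha})/\alpha$; multiplying by $\alpha$ gives $\alpha Ns_{f}(\alpha)=\alpha t_{\alpha}+f(t_{\alpha})=F(\alpha)$. If instead $-\alpha\le a_{f}$, then necessarily $a_{f}>-\infty$, and a slide function with $a_{f}>-\infty$ must be of discontinuous type: if $f'$ had a finite limit at $t_{\infty}^{+}$ then $f$ would stay bounded near $t_{\infty}$, contradicting $\lim_{t\to t_{\infty}^{+}}f(t)=\infty$, which characterizes the continuous type. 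Hence Definition~\ref{S-Newton}(ii) applies and $Ns_{f}(\alpha)=t_{\infty}+\big(\lim_{t\to t_{\infty}^{+}}f(t)\big)/\alpha$, which after multiplying by $\alpha$ matches the second line of Proposition~\ref{Lemma 1}, namely $F(\alpha)=\lim_{t\to t_{\infty}^{+}}f(t)+\alpha t_{\infty}$.

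It remains to treat the case where $f$ is a parabolic slide function and $b_{f}\le-\alpha<0$: here Definition~\ref{S-Newton}(iii) gives $Ns_{f}(\alpha)=d$ and the third line of Proposition~\ref{Lemma 1} (whose hypothesis $b_{f}\le-\alpha\le0$ is met) gives $F(\alpha)=\alpha d$, so $F(\alpha)=\alpha Ns_{f}(\alpha)$ once more. Since the three cases just considered are exactly those allowed by the hypothesis and are pairwise disjoint, the proof is complete. There is no real obstacle here: the argument is pure bookkeeping, and the only points needing care are verifying that the case split covers precisely the hypothesis and lines up with the clauses of Definition~\ref{S-Newton}, and recording the elementary facts ($a_{f},b_{f}\le 0$, and $a_{f}>-\infty$ forcing discontinuous type) that legitimize dividing by $\alpha$ and invoking clause~(ii). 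Alternatively one could argue geometrically via Proposition~\ref{Support}: the support line of slope $-\alpha$ has $y$-intercept $F(\alpha)$ and $x$-intercept $Ns_{f}(\alpha)$, and for a line $y=-\alpha x+c$ with $\alpha>0$ these intercepts are $c$ and $c/\alpha$, which gives $F(\alpha)=\alpha Ns_{f}(\alpha)$ directly.
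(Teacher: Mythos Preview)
Your proof is correct and follows essentially the same route as the paper: both combine the explicit case-by-case formulas from Proposition~\ref{Lemma 1} and Definition~\ref{S-Newton} to read off $F(\alpha)=\alpha\,Ns_{f}(\alpha)$ in each regime. Your additional remarks (that $\alpha>0$ so division is legitimate, and that $a_{f}>-\infty$ forces the discontinuous type so that clause~(ii) of Definition~\ref{S-Newton} applies) are sound refinements the paper leaves implicit.
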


\begin{proof}This follows from Definition \ref{S-Newton} together with Proposition \ref{Lemma 1}. If $-\alpha\in (a_{f},b_{f})$, then
\begin{align}\label{Cor1eq}
    \alpha Ns_{f}(\alpha)=\alpha\left(t_{\alpha}+\dfrac{f(t_{\alpha})}{\alpha}\right)= t_{\alpha}\alpha+f(t_{\alpha})=F(\alpha).
\end{align}Similarly, if $-\infty<-\alpha\leq a_{f}$, then replacing $t_{\alpha}$ by $t_{\infty}$ and $f(t_{\alpha})$ by $\lim_{t\rightarrow t_{\infty}^{+}}f(t)$ in equation \eqref{Cor1eq} we get $\alpha Ns_{f}(\alpha)=F(\alpha)$. On the other hand, if $f$ is a parabolic slide function and $b_{f}\leq -\alpha <0$, then $\alpha Ns_{f}(\alpha)=\alpha d=F(\alpha)$.
\end{proof}

\section{Countable Markov interval maps}
In this section we define the class of Markov interval maps we will deal with, the Markov-Renyi-Luröth maps (MRL maps for short). Several examples will be provided.

\begin{definition}\label{Def31}Let $\{I_{n}\}_{n\in \N}$ be a collection of intervals in $[0,1]$ with disjoint interior. Let $I =\bigcup_{n\in \N}I_{n}$ and $T : I\rightarrow [0,1]$ be a function such that for every $n \in \N$ we have $\overline{T(I_{n})}=[0,1]$, where $\overline{T(I_{n})}$ is the closure of $T(I_{n})$. For each array of positive integers $(a_{n})_{n=1}^{m}$ we define the \emph{cylinder} $$I_{(a_{n})_{n=1}^{m}}:=\bigcap^{m}_{r=0}T^{-r}I_{a_{r}},$$ and the \textit{repeller} of $T$ by $$\Lambda:=\bigcap^{\infty}_{n=0}T^{-n}I.$$
The function $T$ is said to be a \textit{countable Markov interval map} if \begin{enumerate}

    \item $T$ is of class $C^{1+\varepsilon}$  (differentiable with Hölder derivative) and monotone on every interval  $I_{n}$.
    \item There is a non-negative integer $m$ and a fixed point $p\in I$ such that for each $x\in I\setminus \{p\}$ we have $|(T^{m})'(x)|>1$ and $|T'(p)|\geq 1$.
    \item \label{Tempered distortion property} $T$ has \textit{tempered distortion property} (\cite{gr2}), i.e., there exists a sequence of positive numbers $(\rho_{m})_{m\in \N}$ that converges to zero such that for every positive integer $n$ 
\begin{align*}
    \sup_{a_{0},...,a_{n}\in \N}\left(\sup_{x,y\in I_{(a_{k})_{k=1}^{n}}}\dfrac{|(T^{n})'(x)| }{|(T^{n})'(y)|}\right)\leq \exp{(n\rho_{n})}.
\end{align*}
\end{enumerate}
\end{definition}For each $n\in \N$, the tempered distortion property assures that the variation between the values of $(T^{n})'$ is uniformly bounded for all cylinders of length $n$. A point $p\in I$ such that $T(p)=p$ and $|T'(p)|=1$ is called a \emph{parabolic fixed point}. For $T$ a countable Markov interval map, the system $(T,\Lambda)$ is semi-conjugated to the full shift on infinite symbols $(\sigma,\N^{\N})$ (see \cite{io,sa}). 

\begin{definition}\label{MRLmaps}
We will say that a countable Markov interval map $T$ is an \textit{MRL map} (Markov-Renyi-Lüroth) if it meets either one of the following conditions:
\begin{enumerate}
    \item There are constants $K>1$ and $C>0$ such that for every $n\in \N$ and $x\in I_{n}$ we have $$C^{-1}n^{K}\leq |T'(x)|\leq Cn^{K}.$$
    \item The collection of end points of the intervals in $\{I_{n}\}_{n\in \N}$ accumulate only at one point, and $T$ is linear in every interval of $\{I_{n}\}_{n\in \N}$ except, for at most, a finite number of them. 
\end{enumerate}Maps $T$ satisfying $(a)$ will be called \textit{Markov-Renyi-like} maps. Similarly, maps $T$ satisfying $(b)$ will be called \textit{Lüroth-like} maps.
\end{definition} 

The collection of MRL maps contains both the families of the Markov-Renyi maps \cite{io,ij1} and the $\alpha$-Lüroth maps \cite{KMS}. But it is larger than these two families (see Example \ref{Example 4.6}). We now present examples of MRL maps that, as we will see in sections 4 and 5, display different thermodynamic formalism and Lyapunov spectra. 

\begin{example}\normalfont\label{Example Gauss}Let $G: (0,1]\rightarrow [0,1]$ be defined for each $x\in (0,1]$ by $$G(x):=\dfrac{1}{x}-\left\lfloor \dfrac{1}{x}\right\rfloor,$$ where $\lfloor \cdot \rfloor$ is the floor function. The transformation $G$ is called \textit{Gauss map}. Define for every $n\in \N$ the interval $I_{n}=(\frac{1}{n+1},\frac{1}{n}]$. Therefore, for every $x\in I_{n}$, we have that $G(x)=\frac{1}{x}-n$. The Gauss map is an MRL map without parabolic fixed points and with repeller $\Lambda=(0,1]\sm \mathbb{Q}$. This map is closely related to the continued fraction expansion of real numbers. Recall that every irrational number $x\in (0,1)$ can be written in a unique way as \emph{continued fraction}, 
  \begin{align}\label{continued}
      x= \cfrac{1}{a_{1}+\cfrac{1}{a_{2}+\cfrac{1}{a_{3}+\cdots }}}=[a_{1},a_{2},a_{3}...],
  \end{align} where $a_{1},a_{2},...\in \N$. Indeed in equation \eqref{continued} we have that $a_{n}=\lfloor \frac{1}{G^{n-1}x}\rfloor$. Define $\frac{p_{n}(x)}{q_{n}(x)}=[a_{1},a_{2},...,a_{n}]$ as the $n$-th rational approximation of $x$. It turns out that the Lyapunov exponent of $x$ satisfies (see \cite[Page 160]{mh}) $$\lambda(x)=-\lim_{n\rightarrow \infty}\dfrac{1}{n}\log \left|x-\dfrac{p_{n}(x)}{q_{n}(x)}\right|.$$ Therefore, the Lyapunov exponent measures the exponential speed at which the rational numbers $\{\frac{p_{n}(x)}{q_{n}(x)}\}_{n\in \N}$ approximates $x$.
  
  An ergodic measure of the Gauss map is the \textit{Gauss measure} defined for every Borel set $A\subseteq [0,1]$ by $$\mu_{G}(A)=\frac{1}{\log 2}\int_{A}\dfrac{1}{1+x}dx.$$  The measure $\mu_{G}$ is absolutely continuous with respect to the Lebesgue measure. Thus, as a consequence of Birkhoff's ergodic theorem  (see \cite{yh}) for Lebesgue almost every point $x$,  $$\lambda(x)=\dfrac{\pi^{2}}{6\log 2}.$$
\end{example}

\begin{example}\normalfont \label{Example 2} Let $R:[0,1)\rightarrow  [0,1]$ defined for each $x\in [0,1)$ by  $$R(x)=\dfrac{1}{1-x}-\left\lfloor\dfrac{1}{1-x}\right\rfloor.$$ Define, for each $n\in \N$, the intervals $I_{n}=[\frac{n-1}{n},\frac{n}{n+1})$. Therefore, for every $x\in I_{n}$, we have that $R(x)=\frac{1}{1-x}-n$. This map is an MRL map and has a parabolic fixed point at $x=0$. The transformation $R$ is called \textit{Renyi map} (also backward continued fraction map). It is related to the backward continued fraction expansion of irrational numbers (see \cite{af}). Indeed, for every irrational number $x \in (0,1)$ there exists a sequence of positive integers $\{a_{n}\}_{n\in\N}$ such that $$x=1- \cfrac{1}{a_{1}-\cfrac{1}{a_{2}-\cfrac{1}{a_{3}-\cdots }}}=[a_{1},a_{2},a_{3}...],$$where $a_{n}=\lfloor 1/(1-R^{n-1}x)\rfloor$. There is also an $R$-invariant measure $\mu_{R}$ absolutely continuous with respect to the Lebesgue measure. This measure is defined for every Borel set $A\subseteq [0,1]$ by $$\mu_{R}(A)=\int_{A}\dfrac{1}{x} \, dx.$$ The measure $\mu_{R}$ is infinite and $\sigma$-finite. Unlike the Gauss map, there is no finite and absolutely continuous invariant measure with respect to the Lebesgue measure (see \cite[Section 2.3]{io}).
\end{example}

\begin{example}\normalfont\label{ALurorh} Let $\mathcal{A}=\{I_{n}\}_{n\in \N}$ be a countable partition of $[0,1]$ such that for every $n\in \N$, the set $I_{n}$ is a left open right closed interval. The intervals in $\mathcal{A}$ are ordered from right to left starting with $I_{1}$ and accumulate at the origin. For each $n \in \N$ let $a_{n}=\text{Leb}(I_{n})$ where $\text{Leb}$ denotes the Lebesgue measure over $[0,1]$. Define the $k$-th tail of $\{a_{n}\}_{n\in \N}$ by
$t_{k}=\sum^{\infty}_{n=k}a_{n}$.  The \textit{$\alpha$-Lüroth} map for the partition $\mathcal{A}$ is given by 
\begin{align*}
    L_{\mathcal{A}}(x):=\begin{cases}(t_{n}-x)/a_{n} & \text{If $x\in I_{n}$.}\\
    0 & \text{If $x=0$.}\end{cases}
\end{align*} The map $L_{\mathcal{A}}$ is an MRL map, and the Lebesgue measure is ergodic for it. Similarly to the Gauss and Renyi maps, the $\alpha$-Lüroth map generates a series expansion of a real number related to $\mathcal{A}$. For $x\in [0,1]\sm J$, where $J$ is a countable set, let $\{l_{k}\}_{k\in \N}$ be a sequence of positive integers such that for every $k\in \N$ we have $L^{k-1}_{\mathcal{A}}(x)\in I_{l_{k}}$. Then the $\mathcal{A}$-Lüroth-expansion of $x$ (see \cite{KMS}) is given by \begin{align*}
    x=\sum^{\infty}_{n=1}(-1)^{n-1}\left(\prod_{i<n}a_{l_{i}}\right)t_{l_{n}}=t_{l_{1}}-a_{l_{1}}t_{l_{2}}+a_{l_{1}}a_{l_{2}}t_{l_{3}}+\cdots.
\end{align*} This expansion relates Lebesgue almost every point $x$ on $[0,1]$ (up to a countable set) with an infinite sequence of positive integers $\{l_{k}\}_{k\in\N}$.     \end{example}
\begin{example}\normalfont    \label{Example 3} The Manneville-Pomeau map is a two-branch map defined over $[0,1]$ by $f(x)=x+x^{1+s} \mod 1$, where $\frac{1}{2}\leq s \leq 1$ (see \cite{pm}). This map is one of the first examples used to study non-uniform hyperbolic systems. Indeed, it has a parabolic point at $x=0$. We consider an infinite branch generalization of the Manneville-Pomeau map by keeping the interval $I_{0}$ with the parabolic fixed point and defining it as linear maps over a countable partition $\{I_{n}\}_{n\in \N}$ of $[0,1]\sm I_{0}$. It is defined as follows: Let $c\in \R$ be such that $1=c+c^{1+s}$. For every $n\in \N$ let $a_{n}=Leb(I_{n})$, where $Leb$ denotes the Lebesgue measure over $[0,1]$. We define the \textit{Infinite Manneville-Pomeau map} of $x\in [0,1]$ with respect to $\{I_{n}\}_{n\in \N}$ by \begin{align*}
    T(x)=\begin{cases} x+x^{1+s} & \text{if}\quad  x\in  I_{0} .\\
     \dfrac{1-(t_{n}+x)}{a_{n}} & \text{if} \quad x\in I_{n} .\end{cases},
\end{align*} where $t_{n}$ is the $n$th tail of $\{a_{n}\}_{n\in \N}$. It has a unique parabolic fixed point at $x=0$. Note that, also in this case, $\Lambda=[0,1)\sm J$, where $J$ is a countable set in $[0,1]$. The Infinite Manneville-Pomeau map is an MRL map with an invariant probability measure that is absolutely continuous with respect to the Lebesgue measure (\cite[section 2.2]{io}).\end{example}

\section{Thermodynamic formalism for MRL maps}
In this section, we define the topological pressure of MRL maps using Sarig's theory \cite{sa2} for Countable Markov shifts. We present the thermodynamic formalism of MRL maps in Theorem \ref{Thm 1}. We give an example of an MRL map with a pressure function whose behavior has not been considered before, that is, a slide parabolic function of discontinuous type (see Example \ref{Example 4.6}). We also provide the definition of the Hausdorff dimension of subsets in $[0,1]$.

\subsection{Hausdorff Dimension}\hfill \\
We briefly recall the definition of Hausdorff dimension, for more details see \cite[Section 2.2]{fa}.
\begin{definition}\label{HausdorffDim}
     Let $F\subseteq [0,1]$. Let $s,\delta\in \R$  be such that $s\geq 0$ and $\delta>0$. We define 
\begin{align*}
    \mathcal{H}^{s}_{\delta}(F):=\inf\left\{\sum_{i}|U_{i}|^{s}:\begin{array}{c}
         \text{ $\{U_{i}\}$ is a countable cover of $F$ such that for every $i$}  \\
         \text{ $|U_{i}|<\delta$.}
    \end{array} \right\}, 
\end{align*} where $|U_{i}|$ denotes diameter of $U_{i}$ with respect the Euclidean metric on $[0,1]$. Also we define $\mathcal{H}^{s}(F)=\lim_{\delta\rightarrow 0}\mathcal{H}^{s}_{\delta}(F)$. The map $s\mapsto\mathcal{H}^{s}(F)$ is non-increasing and satisfies \begin{align}\label{HAUSDORFF}
    \sup\{s:\mathcal{H}^{s}(F)=\infty\}=\inf\{s:\mathcal{H}^{s}(F)=0\}.
\end{align} The Hausdorff dimension of $F$ is defined by the value in equation \eqref{HAUSDORFF} and is denoted by $\text{Dim}_{H}(F)$. 
\end{definition}

\subsection{Topological pressure}
\begin{definition} \label{TopologicalP}
Let $T$ be an MRL map and let $\mathcal{M}_{T}(I)$ be the space of $T$-invariant probability measures.  For each $t\in\R$, the \textit{topological pressure} of $T$ with respect to $-t\log|T'|$ is defined by
\begin{align}\label{QPT}
     P(t):=\sup\left\{h_{\mu}-\int t\log|T'| d\mu: \mu\in\mathcal{M}_{ T}(I) \text{ and }\int \log|T'| d\mu\neq \infty \right\},
\end{align} where $h_{\mu}$ is the metric entropy with respect to the measure $\mu$. An invariant probability measure $\mu$ is said to be an \textit{equilibrium state for $-t\log{|T'|}$} if it reaches the supremum in equation \eqref{QPT}.  
\end{definition}
 For each MRL map $T$, the function $-\log|T'|$ has summable variations (see \cite{sa2}) and $(T,\Lambda)$ is semi-conjugated to the full-shift on a countable alphabet. We can therefore make use of Sarig's results \cite[Corollary 1]{sa3} (see also \cite[Section 3]{MU}) to conclude that the topological pressure satisfies:
\begin{align}\label{PG1}
    P(t):=\lim_{n\rightarrow \infty}\dfrac{1}{n}\log{\sum_{x:T^{n}x=x}\exp{\left(\sum^{n-1}_{k=0}-t\log|T'(T^{k}x)|\right)}}.
\end{align}

\subsection{Thermodynamic formalism for MRL maps}\hfill \\
As we will see, the lack of compactness and the existence of parabolic fixed points are reflected in the properties of the topological pressure of an MRL map. The following theorem shows that the topological pressure function $t\mapsto P(t)$ of an MRL map is a slide function. Without loss
of generality, we will assume that in case the MRL map has a parabolic
fixed point then this is $x=0$. The regularity of $P$ and the fact that it can be a parabolic slide function are proved analogous to \cite[Theorem 3.1]{io}. When $T$ is a Markov-Renyi-like map, the function $ P$ is of continuous type, this follows from \cite[Lemma 6.2]{io}. When $T$ is a Lüroth-like map, the continuity type of $P$ will depend on the collection of intervals where $T$ is linear since $P$ satisfies equation \eqref{PG1}.
\begin{theorem}
\label{Thm 1}
 Let $T$ be an MRL map. 
 Then there exists $t_{\infty}\geq 0$ such that $P$ is a slide function with $d=\text{Dim}_{H}(\Lambda)$. Moreover
 \begin{enumerate}
     \item If $T(0)=0$ and $|T'(0)|=1$, then
     \begin{itemize}
         \item the function $P$ is a parabolic slide function,
         \item  for every $t\in (t_{\infty},\infty)\sm \{\text{Dim}_{H}(\Lambda)\}$, there exists a unique equilibrium state for $-t\log|T'|$. In particular \begin{itemize}
             \item for $t> \text{Dim}_{H}(\Lambda)$ the Dirac delta measure at zero, $\delta_{0}$, is the only equilibrium state for $-t\log|T'|$, and 
             \item for $-\text{Dim}_{H}(\Lambda)\log|T'|$ the measure  $\delta_{0}$ is the unique equilibrium state if and only if the function $P$ is differentiable at $t=\text{Dim}_{H}(\Lambda)$.
         \end{itemize} 
     \end{itemize}
     \item If $T$ has no parabolic fixed points, then 
     \begin{itemize}
         \item the function $P$ is a non-parabolic slide function,
         \item  for every $t\in (t_{\infty},\infty)$, there exists a unique equilibrium state for $-t\log|T'|$. 
     \end{itemize}
     \item If $T$ is a Markov-Renyi-like map, then $P$ is of continuous type.
     \item If $T$ is a Lüroth-like map, let $\{I_{n}\}_{n\in \N}$ be the collection of intervals where $T$ is linear, and for each $n \in \N$ let $a_{n}$ be the Lebesgue measure of $I_{n}$.  Then $P$ is of discontinuous type if and only if $\lim_{t\rightarrow t^{+}_{\infty}}\log(\sum^{\infty}_{n=1}a_{n}^{t})<\infty$.
 \end{enumerate}
\end{theorem}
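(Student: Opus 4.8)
The plan is to carry the discussion to the full shift on the alphabet $\N$ via the semi-conjugacy, to work throughout with the Gurevich pressure formula \eqref{PG1}, and to treat the four items in turn, relying on Sarig's thermodynamic formalism for countable Markov shifts \cite{sa2,sa3,MU} and, for the parabolic phenomena, on an adaptation of \cite[Theorem 3.1]{io}. First I would set $t_{\infty}:=\inf\{t\in\R:P(t)<\infty\}$. Because $T$ has infinitely many branches it already possesses infinitely many fixed points $x_{n}\in I_{n}$, and in both MRL subclasses $|T'(x_{n})|\to\infty$, so the $n=1$ term $\sum_{x:Tx=x}|T'(x)|^{-t}$ of \eqref{PG1} diverges for every $t\le 0$; hence $t_{\infty}\ge 0$. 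Convexity of $t\mapsto P(t)$ is immediate from the variational formula \eqref{QPT}, since $t\mapsto-t\log|T'|$ is affine, and $P$ is non-increasing because $\int\log|T'|\,d\mu\ge 0$ for every $\mu\in\M_{T}(I)$ (by invariance $m\int\log|T'|\,d\mu=\int\log|(T^{m})'|\,d\mu\ge 0$). So $P(t)=\infty$ for $t<t_{\infty}$, while $P$ is finite, convex and non-increasing on $(t_{\infty},\infty)$; and Bowen's formula, proved as in \cite{io}, identifies $d:=\text{Dim}_{H}(\Lambda)$ as its unique zero, so $t_{\infty}<d$ and the first slide-function requirements are met.

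For items (1) and (2), on $(t_{\infty},\infty)$ the potential $-t\log|T'|$ has summable variations and finite pressure and is positively recurrent away from the phase transition, so Sarig's theory gives real-analyticity of $P$, a unique equilibrium state, and — since $\log|T'|$ is not cohomologous to a constant on $\Lambda$ — strict convexity there. If $T$ has no parabolic point this holds on all of $(t_{\infty},\infty)$, so $P$ is a non-parabolic slide function. If instead $T(0)=0$ and $|T'(0)|=1$, then $\delta_{0}\in\M_{T}(I)$ has free energy $h_{\delta_{0}}-t\int\log|T'|\,d\delta_{0}=0$ for all $t$, whence $P\ge 0$; together with monotonicity and $P(d)=0$ this forces $P\equiv 0$ on $[d,\infty)$ and $P>0$, strictly decreasing and $C^{2}$ on $(t_{\infty},d)$, i.e.\ a parabolic slide function. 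The statements about equilibrium states at and beyond $d$ are then obtained as in \cite{io}: for $t>d$ the estimate $h_{\mu}\le\text{Dim}_{H}(\Lambda)\int\log|T'|\,d\mu$ makes every $\mu\ne\delta_{0}$ have strictly negative free energy, so $\delta_{0}$ is the unique equilibrium state; at $t=d$ the measure $\delta_{0}$ is always an equilibrium state, and the inducing scheme over a uniformly expanding subsystem in \cite{io} produces a second, absolutely continuous equilibrium state exactly when the tail of return times at the parabolic point is summable, which is precisely the case in which $P$ fails to be differentiable at $d$ (equivalently $b_{P}<0$).

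For items (3) and (4), put $Z_{1}(t):=\sum_{n}\sup_{I_{n}}|T'|^{-t}$ and $Z_{k}(t):=\sum_{|w|=k}\sup_{I_{w}}|(T^{k})'|^{-t}$. Sarig's formula \eqref{PG1} and the tempered distortion property give $P(t)=\lim_{k}\tfrac{1}{k}\log Z_{k}(t)$, and $Z_{k}(t)\le Z_{1}(t)^{k}$ for $t\ge 0$, so $P(t)\le\log Z_{1}(t)$; moreover, passing to a subsystem only decreases the pressure. For a Markov-Renyi-like map $\sup_{I_{n}}|T'|^{-t}\asymp n^{-Kt}$, so $t_{\infty}=1/K$ and $\lim_{t\to t_{\infty}^{+}}P(t)=\infty$, i.e.\ continuous type; this is \cite[Lemma 6.2]{io}. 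For a Lüroth-like map, on the linear branches $\sup_{I_{n}}|T'|^{-t}=a_{n}^{t}$ and only finitely many branches are non-linear, their contribution to $Z_{1}$ being finite near $t_{\infty}$; the (infinite) $\alpha$-Lüroth subsystem carried by the linear branches has pressure exactly $\log\sum_{n}a_{n}^{t}$, so $\log\sum_{n}a_{n}^{t}\le P(t)\le\log Z_{1}(t)=\log\sum_{n}a_{n}^{t}+O(1)$ for $t$ near $t_{\infty}$. Since $t\mapsto\sum_{n}a_{n}^{t}$ is non-increasing and tends, by monotone convergence, to $\sum_{n}a_{n}^{t_{\infty}}$ as $t\to t_{\infty}^{+}$, we get $\lim_{t\to t_{\infty}^{+}}P(t)=\infty$ when that series diverges and $\lim_{t\to t_{\infty}^{+}}P(t)<\infty$ when it converges — exactly the stated equivalence.

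The step I expect to be the main obstacle is the endpoint analysis in item (1): producing the second equilibrium state at $t=d$ and matching its existence with the failure of differentiability of $P$. This needs the inducing scheme over a uniformly expanding subsystem, precise control of the tail of return times near the parabolic fixed point, and its interaction with the tempered distortion property; this is where the non-uniform hyperbolicity genuinely enters, and it is the part of the proof that adapts, rather than merely quotes, \cite{io}.
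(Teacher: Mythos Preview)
Your proposal is correct and follows essentially the same route the paper takes: the paper does not give a formal proof of this theorem but, in the paragraph preceding it, points to \cite[Theorem~3.1]{io} for the regularity and parabolic/non-parabolic dichotomy of $P$, to \cite[Lemma~6.2]{io} for the continuous type in the Markov--Renyi-like case, and to equation~\eqref{PG1} (together with Sarig's theory) for the L\"uroth-like case. Your sketch expands exactly these references with the expected details --- the variational-principle convexity/monotonicity, Bowen's formula for $d$, the role of $\delta_0$ in forcing $P\ge 0$ in the parabolic case, the $Z_1$ upper bound and subsystem lower bound for item~(d) --- and your identification of the endpoint analysis at $t=d$ via inducing as the genuinely non-trivial step matches the paper's reliance on \cite{io} there.
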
It is worth noting that Theorem \ref{Thm 1} includes results already obtained for Markov-Renyi maps with or without parabolic fixed points \cite[Theorem 3.1]{io}, and also for $\alpha$-Luröth maps since computing the pressure function is analogous to calculating the topological pressure of a countable Markov shift for a locally Hölder potential (see equation \eqref{PG1} and \cite{sa3}).

The Renyi map is a Markov-Renyi-like map with a parabolic fixed point at $x=0$. Therefore its topological pressure function is a parabolic slide function of continuous type where $\text{Dim}_{H}(\Lambda)=1$ and $t_{\infty}=1/2$. Since the Renyi map does not have a finite invariant measure absolutely continuous with respect to the Lebesgue measure, its topological pressure function is differentiable in $t=1$. On the other hand, the Gauss map is a Markov-Renyi-like map without parabolic fixed points. Consequently, it has a non-parabolic slide function of continuous type as its topological pressure function where $\text{Dim}_{H}(\Lambda)=1$ and $t_{\infty}=1/2$. 

The infinite Manneville-Pomeau map is a Lüroth-like map with a parabolic fixed point at $x=0$. So the topological pressure function is a parabolic slide function with $\text{Dim}_{H}(\Lambda)=1$, where the continuity type and the value of $t_{\infty}$ will depend on the intervals where the map is linear (see Example \ref{Example 4.6}). Since it does have a finite invariant measure absolutely continuous with respect to the Lebesgue measure, its topological pressure function is not differentiable in $t=1$. On the other hand, the $\alpha$-Luröth map is a Lüroth-like map without parabolic fixed points. Therefore, its topological pressure function is a non-parabolic slide function with $\text{Dim}_{H}(\Lambda)=1$, where the continuity type and the value of $t_{\infty}$ will depend on the partition where it is defined. We will now show a couple of examples in this regard.
\begin{example}\normalfont \label{Example 4.5}
 Let $m\in\N$ be fixed. Let $C_{m}=\sum_{n=1}^{\infty}(n+10)^{-2}\log^{-2m}(n+10)$. We consider the sequence 
    \begin{align*}
        \{a_{n,m}\}_{n\in \N}=\left\{\dfrac{(n+10)^{-2}\log^{-2m}(n+10)}{C_{m}}\right\}_{n\in \N}.
    \end{align*}This sequence defines a partition of the interval $[0,1]$. Denote by $L_{\{a_{n,m}\}_{n\in \N}}$ the $\alpha$-Luröth map for $\{a_{n,m}\}_{n\in \N}$ (see Example \ref{ALurorh}). By Theorem \ref{Thm 1} and equation \eqref{PG1}, there exists $t_{\infty}\geq 0$ such that for every $t>t_{\infty}$ we have $P_{m}(t)=\log\left(\sum^{\infty}_{n=1}a_{n,m}^{t}\right)$, where $P_{m}(t)$ is the topological pressure of $-t\log |L_{\{a_{n,m}\}}'|$. Thus, for all $t>t_{\infty}$
    \begin{align*}
        P_{m}(t)=\log\left(\dfrac{1}{C_{m}}\sum^{\infty}_{n=1}\dfrac{1}{((n+10)\log^{m}(n+10))^{2t}}\right).
    \end{align*}The series in the parenthesis is a Dirichlet series. These types of series has been used to generate examples of topological pressure functions that are slide functions of discontinuous type (see \cite{ci,KMS}). Indeed, by \cite[Theorem 2.12]{ci} (see also \cite[Theorem 1]{gt}) for $m=1$ and $m=5$, we have that $\lim_{t\rightarrow t_{\infty}^{+}}P_{1}(t)=\infty$ and $\lim_{t\rightarrow t_{\infty}^{+}}P_{5}(t)<\infty$ respectively, where $t_{\infty}=1/2$.
\end{example}
Using example \ref{Example 4.5}, we can construct an MRL map with a pressure function that is a parabolic slide function of discontinuous type. We now construct an infinite Manneville Pomeau map with these properties.
m \begin{example}\normalfont \label{Example 4.6} Let $\frac{1}{2}\leq s\leq 1$ be fixed and let $c\in \R$ be such that $c+c^{1+s}=1$. Let $m\in\N$ be fixed. We define the sequence $\{(1-c)a_{n,m}\}_{n\in \N}$, where $\{a_{n,m}\}_{n\in \N}$ is the same sequence as in Example \ref{Example 4.5}. For every $n\in \N$ define the intervals $I_{n}=[1-t_{n},1-t_{n+1})$, where $t_{n}$ is the corresponding $n$-th tail of $\{(1-c)a_{n,m}\}_{n\in \N}$. The collection $\{I_{n}\}_{n\in \N}$ is a countable partition of the interval $[c,1)$. Let us consider the Infinite Manneville Pomeau map $T_{m}$ with respect to $\{(1-c)a_{n,m}\}_{n\in \N}$. Let $P_{m}(t)$ be the topological pressure of $-t\log|T_{m}'|$. Therefore, by Theorem \ref{Thm 1}, the function $P_{m}$ is a slide function of discontinuous type if and only if $\lim_{t\rightarrow t^{+}_{\infty}}\log\left((1-c)^{t}\sum^{\infty}_{n=1}a_{n,m}^{t}\right)<\infty$. So, by \cite[Theorem 1]{gt} for $m=5$ we have that the series $\sum^{\infty}_{n=1}a_{n,5}^{t}$ converges as $t\rightarrow t_{\infty}^{+}$, where $t_{\infty}=1/2$. Thus, the function $P_{5}$ is a parabolic slide function of discontinuous type, since $T_{5}$ has a parabolic fixed point at $x=0$.
\end{example}
\section{Lyapunov spectrum for MRL maps}
In this section, we establish our main results. The theorem below generalizes results from Iommi \cite[Theorem 4.1]{io}, and, from Kesseböhmer, Munday, and Stratmann \cite[Theorem 2]{KMS}. Indeed Theorem \ref{THM 5.1} also applies to MRL maps such that its topological pressure function is a parabolic slide function of discontinuous type (see Example \ref{Example 5.4}), a case not covered in previous results. Denote by $Dom(L)$ the domain of the Lyapunov spectrum function $L$, i.e., $Dom(L)=\{\alpha\in \R:J_{\alpha}\neq \emptyset\}$.

\begin{theorem}\label{THM 5.1}
Let $T$ be an MRL map. Then $Dom(L)$ is an unbounded subinterval of $[0,\infty)$ and for each nonzero $\alpha\in Dom(L)$
\begin{align}\label{Q51}
    L(\alpha)=\dfrac{1}{\alpha}\inf_{t\in \R}\left\{P(t)+t\alpha\right\}.
\end{align}Whenever $0\in Dom(L)$ we get $L(0)=\lim_{\alpha\rightarrow 0+}L(\alpha)$.
\end{theorem}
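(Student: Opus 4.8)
The plan is to follow the classical multifractal analysis strategy, establishing the dimension formula \eqref{Q51} by proving a matching upper and lower bound for $\mathrm{Dim}_H(J_\alpha)$, and then separately handling the structure of $Dom(L)$ and the continuity at $0$. For the \textbf{upper bound}, I would use the characterization of the pressure via periodic points (equation \eqref{PG1}) to construct, for each $\alpha$ and each $t$, an efficient cover of $J_\alpha$ by cylinders: on the set where $\frac{1}{n}\log|(T^n)'(x)| \approx \alpha$, the cylinder $I_{(a_k)_{k=1}^n}$ containing $x$ has diameter comparable (by the tempered distortion property, condition \ref{Tempered distortion property}) to $|(T^n)'(x)|^{-1} \approx e^{-n\alpha}$. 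Summing $\sum |I|^{s}$ over such cylinders and comparing with $e^{n(P(-t\log|T'|) + t\alpha - s\alpha)}$ for the appropriate $t$ shows the sum stays bounded when $s\alpha \ge \inf_t\{P(t)+t\alpha\}$, i.e., $\mathrm{Dim}_H(J_\alpha) \le \frac{1}{\alpha}\inf_t\{P(t)+t\alpha\}$. The tempered distortion (rather than bounded distortion) introduces the $e^{n\rho_n}$ factors, but since $\rho_n \to 0$ these are subexponential and do not affect the exponential rate.

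For the \textbf{lower bound}, I would exploit the fact (from Theorem \ref{Thm 1}) that for $t \in (t_\infty, \mathrm{Dim}_H(\Lambda))$ outside the phase transition points there is a unique equilibrium state $\mu_t$ for $-t\log|T'|$, with $\int \log|T'|\,d\mu_t = -P'(t)$ and $h_{\mu_t} = P(t) + tP'(t)$ via the derivative formula for pressure. For $\alpha$ with $-\alpha \in (a_P, b_P)$, choosing $t_\alpha$ with $P'(t_\alpha) = -\alpha$ gives an ergodic measure supported (in the Birkhoff sense) on $J_\alpha$, and the Hausdorff dimension of $\mu_{t_\alpha}$ is $h_{\mu_{t_\alpha}}/\lambda(\mu_{t_\alpha}) = (P(t_\alpha) + t_\alpha\alpha)/\alpha$, which by Proposition \ref{Lemma 1} equals $\frac{1}{\alpha}F(-\alpha) = \frac{1}{\alpha}\inf_t\{P(t)+t\alpha\}$, matching the upper bound. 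For the boundary regimes — $-\alpha \le a_P$ (discontinuous type) and the parabolic regime $b_P \le -\alpha < 0$ — I would use an approximation argument: either approximating by measures on finitely-generated subsystems whose dimension approaches $t_\infty + \lim_{t\to t_\infty^+}P(t)/\alpha$, or, in the parabolic regime, using the Dirac mass $\delta_0$ together with the $w$-measure and a concatenation/Moran-construction argument to realize the dimension $d = \mathrm{Dim}_H(\Lambda)$ on $J_0$-adjacent level sets, again consistent with Proposition \ref{Lemma 1}.

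To identify $Dom(L)$, I would note that $\lambda(x) \ge 0$ always (from condition (2) of Definition \ref{Def31}, $|(T^m)'| > 1$ forces nonnegative exponents), so $Dom(L) \subseteq [0,\infty)$; unboundedness follows because the branches have $|T'|$ growing without bound (polynomially in $n$ for Markov-Renyi-like maps, or because the intervals shrink to a point for Lüroth-like maps), so one can build points with arbitrarily large Lyapunov exponent, and the intermediate values are covered by the equilibrium states $\mu_t$ as $t$ ranges over $(t_\infty,\infty)$ together with the boundary constructions. The claim $L(0) = \lim_{\alpha\to 0^+}L(\alpha)$ when $0 \in Dom(L)$ would be handled by a direct argument: $J_0$ contains the parabolic point's orbit closure and can be approached by level sets $J_\alpha$ with $\alpha \to 0^+$, and one shows the dimension is lower semicontinuous from the construction side and upper semicontinuous from the covering side at $\alpha = 0$.

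The \textbf{main obstacle} I anticipate is the lower bound in the two non-generic regimes — the discontinuous-type boundary $-\alpha \le a_P$ and the parabolic boundary $b_P \le -\alpha < 0$ — precisely the cases not present in the compact or uniformly hyperbolic settings. There one cannot simply quote the variational principle for an equilibrium state realizing the dimension; instead one must carry out an explicit Moran-type or inducing construction producing a Cantor subset of $J_\alpha$ (or of $J_0$) with the prescribed dimension, controlling distortion only through the tempered (not uniformly bounded) estimates and handling the non-compactness by truncating to finite subalphabets and passing to a limit. Reconciling this construction's dimension with the exact value $\frac{1}{\alpha}F(-\alpha)$ given by Proposition \ref{Lemma 1} — especially matching the constant $t_\infty$ in the formula $t_\infty + \lim_{t\to t_\infty^+}P(t)/\alpha$ — is where the delicate work lies, and it is what makes Theorem \ref{THM 5.1} genuinely more general than \cite[Theorem 4.1]{io} and \cite[Theorem 2]{KMS}.
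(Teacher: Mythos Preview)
Your upper-bound strategy coincides with the paper's: cylinder covers, tempered distortion, and the periodic-point pressure formula \eqref{PG1}. The lower bound, however, is handled quite differently. You propose to produce the dimension directly via equilibrium states $\mu_t$ in the regular regime $-\alpha\in(a_P,b_P)$ and via bespoke Moran-type constructions in the discontinuous and parabolic boundary regimes. The paper instead argues uniformly: it restricts $T$ to its first $n$ branches, quotes the finite-branch formula $L_n(\alpha)=\frac{1}{\alpha}\inf_t\{P_n(t)+t\alpha\}$ from \cite{gr}, and passes to the limit. Since $L_n\le L$ trivially, the only issue is that pointwise convergence $P_n\to P$ (from \cite{sa2}) does not automatically give pointwise convergence of the Legendre transforms $F_n\to F$; the paper resolves this via Wijsman's notion of \emph{infimal convergence} \cite{wi,wi2}, which is stable under Legendre duality and is dominated by the pointwise limit when both exist, yielding $\frac{1}{\alpha}F(\alpha)\le\lim_n L_n(\alpha)\le L(\alpha)$ for every $\alpha$ at once. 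This completely sidesteps what you correctly flag as your main obstacle --- the boundary-regime constructions are never needed. Your route is viable in the regular regime (modulo checking a Volume Lemma $\dim_H\mu\ge h_\mu/\lambda(\mu)$ in the non-compact setting), but in the boundary cases you have only sketched the shape of the argument; matching, e.g., the exact value $t_\infty+\lim_{t\to t_\infty^+}P(t)/\alpha$ by an explicit Cantor construction is precisely the labor that the approximation-plus-Wijsman argument replaces.
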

\begin{proof} 
We will first use an approximation argument similar to the one employed in \cite[Section 7.1]{io}. For each $n\in \N$, let $T_{n}$ is the restriction of $T$ to $\bigcup^{n}_{k=1}I_{k}$. Let $\Lambda_{n}$ and $L_{n}$ be the repeller and Lyapunov spectrum with respect to $T_{n}$, respectively. Also, let $P_{n}(t)$ be the topological pressure with respect to $-t\log(|T_{n}'|)$ and denote by $Dom(L_{n})$ the domain of $L_{n}$. Since the dynamical system  $(T_{n},\Lambda_{n})$ is defined over a finite collection of intervals, we can deduce  from \cite[Theorem 1]{gr}  that for every $\alpha\in Dom(L_{n})$
\begin{align}\label{Q52}
    L_{n}(\alpha)=\dfrac{1}{\alpha}\inf_{t\in \R}\left\{P_{n}(t)+t\alpha\right\},
\end{align}where $L_{n}(0)=\lim_{\alpha \rightarrow 0^{+}}L(\alpha)$ in case $0\in Dom(L_{n})$. According to the definition of $T_{n}$, every $\alpha$ in $Dom(L_{n})$ is also in $Dom(L_{n+1})$. So, for every $n\in \N$ we have $L_{n}(\alpha)\leq L_{n+1}(\alpha)\leq \cdots \leq L(\alpha)$. It follows that, for each $\alpha\in Dom(L_{n})$ the sequence of numbers $\{L_{k}(\alpha)\}_{k\geq n}$ is monotone and bounded by $\text{Dim}_{H}(\Lambda)$. This implies that the sequence of functions $\{L_{n}\}_{n\in \N}$ has a pointwise limit. By equation \eqref{Q52}, we get for every $\alpha \in Dom(L)$ that 
\begin{align}\label{Q53}
    \lim_{n\rightarrow \infty}\dfrac{1}{\alpha}\inf_{t\in \R}\{P_{n}(t)+t\alpha\}\leq L(\alpha).
\end{align}  Now, we show that $\frac{1}{\alpha}\inf_{t\in \R}\{P(t)+t\alpha\}$ is lower than the left-hand side term of inequality \eqref{Q53}.  Recall, by Definition \ref{Legendre transform }, that $F(\alpha)=\inf_{t\in \R}\{P(t)+t\alpha\}$ and for every $n\in \N$ that $F_{n}(\alpha)=\inf_{t\in \R}\{P_{n}(t)+t\alpha \}$. On the other hand, it is known from Sarig's theory on countable Markov shifts \cite[Theorem 2]{sa2} that for all $t\in \R$, 
\begin{align}\label{SarigConvergence}
    \lim_{n\rightarrow \infty}P_{n}(t)=P(t).
\end{align}In general, pointwise convergence of $P_{n}$ does not imply pointwise convergence of the corresponding Legendre transforms. Thus, to obtain the lower bound we aim for (inequality \ref{AimFOR}), we use a notion of convergence introduced by Wijsman \cite{wi} which ensures the convergence of the sequence of Legendre transforms. We say that $\{P_{n}
\}_{n\in \N}$ \textit{converges infimally} to $P$ if for every $t\in \R$ \begin{align}\label{InfimalCon}
    \lim_{\beta\rightarrow 0}\liminf_{n\rightarrow \infty}\inf\{P_{n}
(s):|t-s|<\beta\}=\lim_{\beta\rightarrow 0}\limsup_{n\rightarrow \infty}\inf\{P_{n}
(s):|t-s|<\beta\}=P(t).
\end{align} Recall that the topological pressure function $P_{n}(t)$ is non-increasing for $T_{n}$ with or without parabolic points (see \cite{gr}). Thus, for every $\beta>0$, we have that $\inf\{P_{n}(s):|t-s|<\beta\}=P(t+\beta)$. By equation \eqref{SarigConvergence} we obtain $\limsup_{n\rightarrow \infty}$ and $\liminf_{n\rightarrow \infty}$ in equation \eqref{InfimalCon} coincide for every $\beta>0$. Therefore, $\{P_{n}\}_{n\in \N}$ converges infimally to $P$.

 By Wijsman's work on infimally convergence (see \cite[Theorem 6.2]{wi2}), the fact that $\{P_{n}\}_{n\in \N}$ converges infimally to $P$ is equivalent to the fact that the respective Legendre transforms converges infimally, i.e., $\{F_{n}\}_{n\in \N}$ converges infimally to $F$ as functions on $\alpha$. Even so, the notions of punctual and infimal convergence do not imply one another.
However, when both limits exist, the function given by punctual convergence dominates the one given by infimal convergence (see \cite[Section 5]{wi2}), which implies $$\dfrac{1}{\alpha}\inf_{t\in \R}\{P(t)+t\alpha\}\leq \lim_{n\rightarrow \infty}\dfrac{1}{\alpha}\inf_{t\in \R}\{P_{n}(t)+t\alpha\}.$$ Therefore, together with inequality \eqref{Q53} we get 
\begin{align}\label{AimFOR}
    \dfrac{1}{\alpha}\inf_{t\in \R}\{P(t)+t\alpha\}\leq L(\alpha).
\end{align}We will use a covering argument on $J_{\alpha}$ to prove the other inequality; that is, we will apply directly the Hausdorff dimension definition. Without loss of generality, suppose $T$ has a parabolic fixed point at $x=0$. To inherit expansiveness conveniently, we will consider for every $m\in \N$ the set $\Lambda_{\frac{1}{m}}=\bigcap^{\infty}_{n=0}T^{-n}[\frac{1}{m},1]$. Notice that for every $r\in \N$ large enough the set $\Lambda\setminus \bigcup^{\infty}_{m=r}\Lambda_{\frac{1}{m}}$ is the set of points $x\in \Lambda$ such that all $m\in \N$, $T^{n}x\leq \frac{1}{m}$ for some $n\in \N$. Then, the set $\Lambda\setminus \bigcup^{\infty}_{m=r}\Lambda_{\frac{1}{m}}$ is the countable set of preimages of $x=0$, and so, $\Lambda$ and $\bigcup^{\infty}_{m=r}\Lambda_{\frac{1}{m}}$ have the same Hausdorff dimension since $\Lambda_{\frac{1}{m}}\subseteq \Lambda_{\frac{1}{m+1}}\subseteq \cdots \subseteq\Lambda$. In particular, 
\begin{align*}
    L(\alpha)= \text{Dim}_{H}\left(J_{\alpha}\cap \bigcup^{\infty}_{m=r}\Lambda_{\frac{1}{m}}\right).
\end{align*}Thus, it is enough to show for every $m\in \N$  that $\text{Dim}_{H}(J_{\alpha} \cap \Lambda_{\frac{1}{m}})\leq \frac{1}{\alpha}F(\alpha)$. Let $m\in \N$ be fixed. To construct an adequate cover of $J_{\alpha}\cap \Lambda_{\frac{1}{m}}$, let $t>t_{\infty}$ be a fixed parameter with the property of having a neighborhood where $P$ is decreasing. Let $\varepsilon>0$ such that $P$ is decreasing in $[t-\varepsilon,t+\varepsilon]$. By definition of $P$ respect the potential $-(t+\varepsilon)\log|T'|-P(t)$ and the linearity of the integral we have $$P_{\varepsilon}:=P(-(t+\varepsilon)\log|T'|-P(t))=P(t+\varepsilon)-P(t)<0.$$ Equation \eqref{PG1} implies there exists $M\in \N$ such that for every $k\geq M$
\begin{align}\label{Q54}
    \sum_{x:T^{k}x=x}\exp{\left(-(t+\varepsilon)S_{k}\log{|T'(x)|}-kP(t)\right)}<\exp{\left(\dfrac{P_{\varepsilon}k}{2}\right)}<1.
\end{align}Let $\delta>0$ be fixed such that $\delta\frac{P(t)}{\alpha}+\frac{P_{\varepsilon}}{2}<0$. On the other hand, for every $y\in J_{\alpha}$ there exists $N_{y}\in \N$ such that for all $k\geq N_{y}$ 
 \begin{align}\label{Q55}
     \left|\dfrac{S_{k}\log|T'(y)|}{k}-\alpha\right|<\delta.
 \end{align} 
 \begin{lemma}\label{LemmaProof1}
     There exists a constant $C_{m}>1$ such that for each $k\in \N$, there is a countable cover of $J_{\alpha}\cap \Lambda_{\frac{1}{m}}$ of cylinders with a diameter lower than $C^{-k}_{m}$.
 \end{lemma}
 \begin{proof}
     Notice each interval $\Delta_{(a_{i})^{n}_{k=0}}:=\bigcap^{n-1}_{i=0}T^{-k}([\frac{1}{m},1]\cap I_{a_{k+1}})$ holds that $$N(\Delta_{(a_{i})^{n}_{i=1}})=\min\{N_{y}:y\in J_{\alpha}\cap \Delta_{(a_{i})^{n}_{i=1}}\}<\infty,$$ since $J_{\alpha}$ is dense in $I$ by Definition \ref{MRLmaps}. For each fixed $k\in \N$, we define \begin{align*}
    \overline{A}_{k}:=\{\Delta_{(a_{i})_{i=1}^{k}}: \text{ $N(\Delta_{(a_{i})_{i=1}^{k}})\leq k$}\},
\end{align*} and for every $n> k$, we set
\begin{align*}
    A_{n}:=\{\Delta_{(a_{i})_{i=1}^{n}}: \text{ $N(\Delta _{(a_{i})_{i=1}^{n}})=n$}\}.
\end{align*}  We claim the collection of cylinders $\mathcal{A}_{k}=\overline{A_{k}}\cup \bigcup^{\infty}_{n=k+1}A_{n}$ is a countable cover of $J_{\alpha}\cap \Lambda_{\frac{1}{m}}$. Let $y\in J_{\alpha} \cap \Lambda_{\frac{1}{m}}$. Then $y\in \Delta_{(a_{i})^{N_{y}}_{i=1}}$ where $T^{j-1}y\in I_{a_{j}}$ for every $1\leq j \leq N_{y}$. Consider the cylinder $\Delta_{(a_{i})^{N}_{i=1}}$ where $N=N( \Delta_{(a_{i})^{N_{y}}_{i=1}})$ so that $\Delta_{(a_{i})^{N}_{i=1}}$ is the cylinder whose length coincides with the minimum quantity of iterations that an element in $\min( \Delta_{(a_{i})^{N_{y}}_{i=1}})$ needs for the related Birkhoff sums to be close enough to $\alpha$. Notice $N\leq N_{y}$. If $N\leq k$, then $\Delta_{(a_{i})^{k}_{i=1}}\in \overline{A}_{k}$ where we also assume $T^{j-1}y\in I_{a_{j}}$ for $1\leq j \leq k$. On the other hand, if $N>k$, then $\Delta_{(a_{i})^{N}_{i=1}}\in A_{N}$. The fact that $\mathcal{A}_{k}$ is a countable set is a consequence that the cardinality of $\N^{n}$ bounds the cardinality of $A_{n}$.

Now, we study the diameter of the elements in $\mathcal{A}_{k}$. Notice $T$ is expansive in each interval $\Delta_{n}$. By Definition \ref{MRLmaps} we have $|T'|$ is not close to 1 in these intervals ($|T|$ is constant or has polynomial growth). We define 
\begin{align*}
        C_{m}=\inf_{x\in I\setminus T^{-1}[\frac{1}{m},1]}|T'x|>1.
    \end{align*}
By the Mean value Theorem in each cylinder $\Delta_{n}$, we have that $C_{m}<|\Delta_{n}|^{-1}$. Thus, $|\Delta_{n}|<C_{m}^{-1}$. Inductively, for each array $(a_{i})^{n}_{i=1}$, we have that 
\begin{align*}
    C_{m}<\dfrac{|\Delta_{(a_{i})^{n}_{i=2}}|}{|\Delta_{(a_{i})^{n}_{i=1}}|}<\dfrac{C_{m}^{-n+1}}{|\Delta_{(a_{i})^{n}_{i=1}}|},
\end{align*} and so $|\Delta_{(a_{i})^{n}_{i=1}}|<C^{-n}_{m}$. 
 \end{proof}

The Mean value Theorem also implies that in each cylinder $\Delta_{(a_{i})^{n}_{i=1}}$, there exists $\zeta\in \Delta_{(a_{i})^{n}_{i=1}}$ such that $|\Delta_{(a_{i})^{n}_{i=1}}|^{-1}=|(T^{n})'\zeta|/(1-\frac{1}{m})$. By the tempered distortion property, for every $y\in \min(\Delta_{(a_{i})^{n}_{i=1}})$, we have that 
\begin{align}\label{IneqTDP}
|\Delta_{(a_{i})^{n}_{i=1}}|^{-1}\leq \dfrac{m}{m-1}e^{n\rho_{n}}|(T^{n})'y| 
\end{align}where $\{\rho_{n}\}_{n\in \N}$ is a sequence of positive numbers converging to zero. By definition \ref{HausdorffDim}, together with inequality \eqref{IneqTDP}, for $s=t+\varepsilon+\frac{P(t)}{\alpha}$ we get
\begin{align}\label{Q57}
    \mathcal{H}^{s}_{C^{-k}_{m}}(J_{\alpha}\cap \Lambda_{\frac{1}{m}}) \leq \sum_{\Delta_{(a_{i})^{n}_{i=1}}\in \mathcal{A}_{k}}\left(\frac{m}{m-1}e^{n\rho_{n}-S_{n}\log|T'y|}\right)^{s},
\end{align} where, abusing notation, in each addend $y\in \min(\Delta_{(a_{i})^{n}_{i=1}})$ holds inequality \eqref{IneqTDP}. Since $N_{y}\leq n$, we have 
\begin{align*}
    s(n\rho_{n}-S_{n}\log|T'y|)\leq ns\rho_{n}-(t+\varepsilon )S_{n}\log |T'y|-n\dfrac{P(t)(\alpha-\delta)}{\alpha}.
\end{align*} On the other hand, using tempered distortion on the interval $I_{(a_{i})^{n}_{i=1}}$, we have that $|(T^{n})'y|^{-1}\leq |(T^{n})x_{(a_{i})^{n}_{i=1}}|e^{n\rho_{n}}$, where $x_{(a_{i})^{n}_{i=1}}$ denotes the periodic point with period $n$ in $I_{(a_{i})^{n}_{i=1}}$. Therefore, 
\begin{align*}
    s\left(n\rho_{n}-S_{n}\log |T'(y)|\right) 
    & \leq n\rho_{n}(s+(t+\varepsilon)) -nP(t)+\delta\frac{nP(t)}{\alpha}
     \\
     & -(t+\varepsilon)S_{n}\log |T'(x_{(a_{i})^{n}_{i=1}})|.
\end{align*}Let $C_{2}=\left(\frac{m}{m-1}\right)^{s}$. Together with inequality \eqref{Q57}, we get 
\begin{align*}
    \mathcal{H}^{s}_{C^{-k}_{m}}(J_{\alpha}\cap \Lambda_{\frac{1}{m}}) \leq C_{2}\sum_{\Delta_{(a_{i})^{n}_{i=1}}\in \mathcal{A}_{k}}e^{ n\rho_{n}(s+(t+\varepsilon)) -nP(t)+\delta\frac{nP(t)}{\alpha} -(t+\varepsilon)S_{n}\log |T'(x_{(a_{i})^{n}_{i=1}})|}.
\end{align*}Since  $\frac{\delta P(t)}{\alpha}+\frac{P_{\varepsilon}}{2}<0$ there exists  $M_{2}\in \N$ such that for every $k\geq M_{2}$, we have $\rho_{k}(s+(t+\varepsilon))+\frac{1}{2}\left(\frac{\delta P(t)}{\alpha}+\frac{P_{\varepsilon}}{2}\right)<0$ since $\{\rho_{n}\}\rightarrow 0$. Let $\gamma=\sup_{k\geq M_{2}}\rho_{k}(s+(t+\varepsilon))$. Separating the cylinders by length, for every $k\geq M_{2}$ 
\begin{align*}
    \mathcal{H}^{s}_{C^{-k}_{m}}(J_{\alpha}\cap \Lambda_{\frac{1}{m}}) \leq C_{2}\sum^{\infty}_{n=k}e^{n(\gamma+\frac{\delta P(t)}{\alpha})} \sum_{ \Delta_{(a_{i})^{n}_{i=1}}\in \mathcal{A}_{k}}e^{-n\left(S_{n}(t+\varepsilon)\log |T'(x_{(a_{i})^{n}_{i=1}})|+P(t)\right)}.
\end{align*}The sums in the right-hand side of the inequality are supported on periodic points of $T$, we have for $k\geq \max\{M,M_{2}\}$ (see inequality \eqref{Q54})
\begin{align*}
    \mathcal{H}^{s}_{C_{m}^{-k}}\left(J_{\alpha}\cap \Lambda_{\frac{1}{m}}\right) 
    & \leq C_{2} \sum^{\infty}_{n=k}e^{n(\gamma+\delta\frac{P(t)}{\alpha})} \sum_{x:T^{r}x=x}e^{-n\left(S_{n}(t+\varepsilon)\log|T'(x)|+P(t)\right)}\\
    & \leq C_{2} \sum^{\infty}_{n=k}e^{ n\left(\gamma+\delta\frac{P(t)}{\alpha}+\frac{P_{\varepsilon}}{2}\right)}.
\end{align*}
 Since $\gamma+\delta\frac{P(t)}{\alpha}+\frac{P_{\varepsilon}}{2}<0$, we get that $\sum^{\infty}_{n=r}e^{ n\left(\gamma+\delta\frac{P(t)}{\alpha}+\frac{P_{\varepsilon}}{2}\right)}$ is a geometric series. Thus, because both the tail of the series and $C^{-k}_{m}$ converge to zero when $k\rightarrow \infty$, 
\begin{align*}
   \mathcal{H}^{s}\left(J_{\alpha}\cap \Lambda_{\frac{1}{m}}\right) =\lim_{k\rightarrow \infty} \mathcal{H}^{s}_{C_{m}^{-k}}\left(J_{\alpha}\cap \Lambda_{\frac{1}{m}}\right)= 0.
\end{align*} Thus, $\text{Dim}_{H}(J_{\alpha}\cap \Lambda_{\frac{1}{m}})\leq (t+\varepsilon)+\frac{P(t)}{\alpha}$. Since $\varepsilon>0$ was chosen arbitrarily, $\text{Dim}_{H}(J_{\alpha}\cap \Lambda_{\frac{1}{m}})\leq t+\frac{P(t)}{\alpha}$ for all $t>t_{\infty}$ having a neighborhood where $P$ is strictly decreasing. On the other hand, $P$ is a slide function by Theorem \ref{Thm 1}. Therefore, the same arguments of differentiability used in Proposition \ref{Lemma 1} imply $\text{Dim}_{H}(J_{\alpha}\cap \Lambda_{\frac{1}{m}})\leq \frac{1}{\alpha}F(\alpha)$, where $F(\alpha)=\inf_{t\in \R}\{P(t)+t\alpha\}$. This completes the proof of equation \eqref{Q51} for $\alpha\neq 0$. On the other hand, equation \eqref{Q51} and Proposition \ref{Lemma 1} imply the domain of $L$ to be an unbounded interval of non-negative numbers and that $L(0)=\lim_{\alpha \rightarrow 0+}L(\alpha)$ exists in case $0\in Dom(L)$.
\end{proof}

\begin{remark}
    It is worth noting that, in the finitely many branches scenario $(T_{n},\Lambda_{n})$, the derivative of the pressure $P_{n}$ at $t$ coincides with $-\int \log{|T_{n}'|}d\mu_{t}$ (see \cite[Theorem 5.6.5.]{pu}), where $\mu_{t}$ is the equilibrium state for $-t\log{|T_{n}'|}$. Thus, for $\mu_{t}$ almost every point $x\in \Lambda_{n}$ we have $\lambda(x)=-P_{n}'(t)$.
\end{remark}
 \begin{figure}
    \centering
    \includegraphics[width=0.7
    \linewidth]{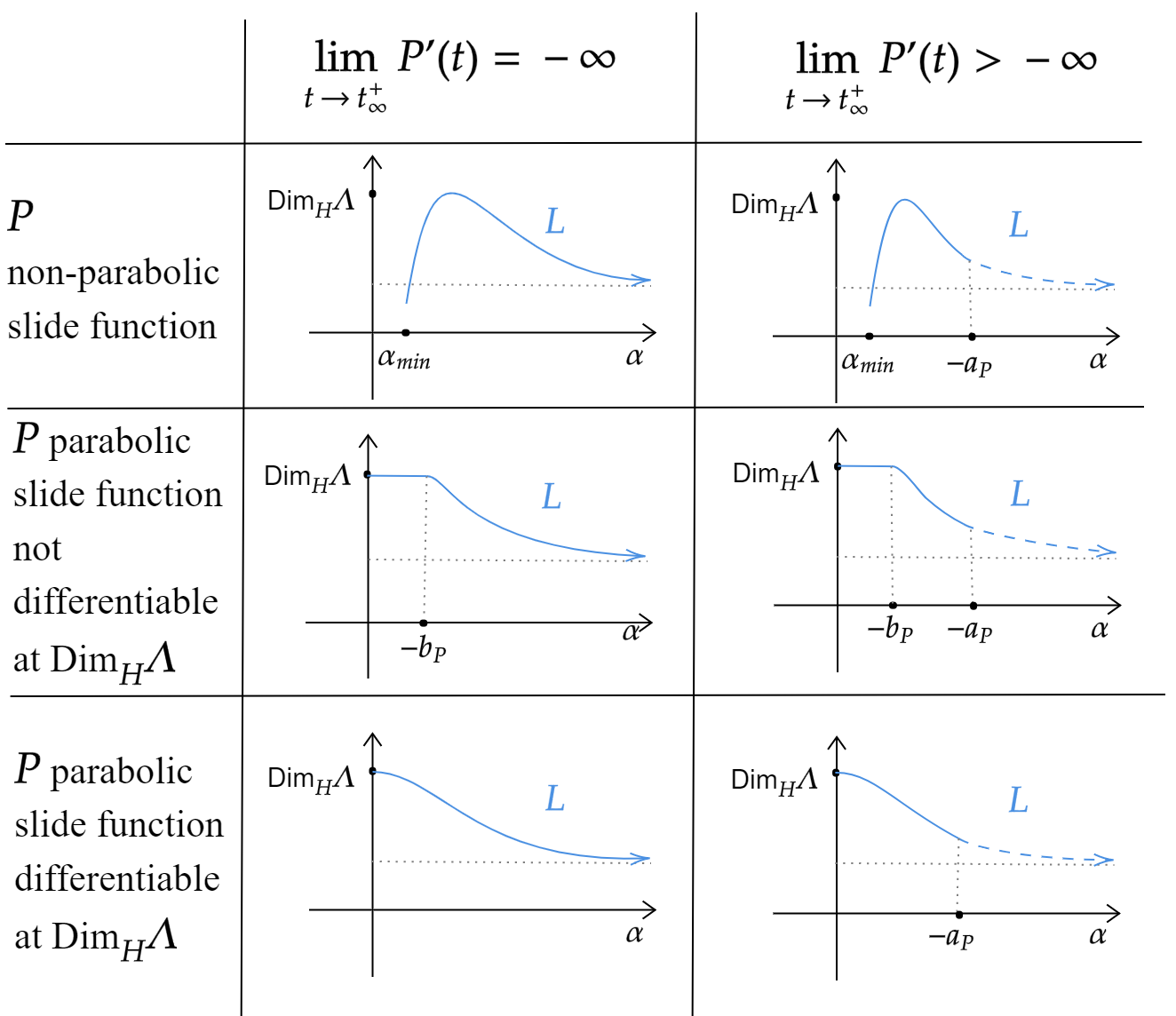}
    \caption{The Lyapunov spectrum graph of $MRL$ maps is depicted, where $a_P$ represents the limit as $t$ approaches $t_{\infty}^{+}$ of $P'(t)$ and $b_P$ represents the limit as $t$ approaches $\text{Dim}_{H}^{-}\Lambda$ of $P'(t)$. Additionally, for some $\alpha_{\text{min}} \geq 0$, we have $Dom(L) \subseteq [\alpha_{\text{min}},\infty)$. When $\alpha \geq -a_P$, the dashed segments correspond to $L(\alpha) = t_{\infty} + \frac{\lim_{t\rightarrow t_{\infty}^{+}}P(t)}{\alpha}$. The intersection of the horizontal dotted line with the $y$-axis represents the Hausdorff dimension of the set of points for which the Lyapunov exponent is not defined.}
    \label{fig:my_label3}
\end{figure}

In Figure \ref{fig:my_label3} we present different behaviors of the Lyapunov spectrum given by MRL maps. The following are two examples that have not been considered before in which Theorem \ref{THM 5.1} applies. 
\begin{example}\normalfont\label{Example 5.3}
Let $m\in\N$ be fixed. Let $C_{m}$,and let the sequence $\{a_{n,m}\}$ be the same series and sequence as in example \ref{Example 4.5}. By differentiating the pressure function $P_{m}(t)=\log\left(\sum^{\infty}_{n=1}a_{n,m}^{t}\right)$, we obtain for all $t>t_{\infty}$ $$P_{m}'(t)=\dfrac{\sum^{\infty}_{n=1}a_{n,m}^{t}\log(a_{n,m})}{\sum^{\infty}_{n=1}a_{n,m}^{t}},$$ that is, for $D_{m}:=((n+10)\log^{m}(n+10))^{2t}$ we get
    \begin{align*}
        P_{m}'(t)=-\log(C_{m})-\dfrac{2}{\sum^{\infty}_{n=1}D_{m}}\left(\sum_{n=1}^{\infty}\dfrac{\log(n+10)}{D_{m}}+m\sum_{n=1}^{\infty}\dfrac{\log(\log(n+10))}{D_{m}}\right).
    \end{align*} The first series in the parentheses is a Dirichlet series, and the second is dominated by the first. Using \cite[Theorem 1]{gt}, for $m=1$, the series $\sum_{n=1}^{\infty}\frac{\log(n)}{(n+10)\log^{1}(n+10))^{2t}}$ diverges as $t\rightarrow t_{\infty}^{+}$. Therefore $\lim_{t\rightarrow t_{\infty}^{+}}P_{1}'(t)=-\infty$. On the other hand, if we take $m=5$, we will have by \cite[Theorem 1]{gt} that the first series of parentheses converges when $t\rightarrow t_{\infty}^{+}$, therefore $\lim_{t\rightarrow t_{\infty}^{+}}P_{5}'(t)>-\infty$. So, just as $L_{\{a_{n,m}\}_{n\in \N}}$ are MRL maps, in Figure \ref{fig:my_label3}, the Lyapunov spectrum graphs of $L_{\{a_{n,1}\}_{n\in \N}}$ 
 and $L_{\{a_{n,5}\}_{n\in \N}}$ are similar to the first and second graphs of the first row, respectively. Analogous examples have been shown in \cite[Figure 5.3. and Figure 5.4]{KMS}.
\end{example}

\begin{example}\normalfont\label{Example 5.4}
   Consider $T_{m}$ the infinite Manneville Pomeu map of example \ref{Example 4.6}, i.e., let $\frac{1}{2}\leq s \leq 1$ be fixed and let $c\in \R$ be such that $1=c+c^{1+s}$. Bet $m\in \N$ be fixed and let the sequence $\{(1-c)a_{n,m}\}_{n\in \N}$ where $\{a_{n,m}\}_{n\in N}$ is the same sequence as in example \ref{Example 5.3}. By Theorem \ref{Thm 1} and \cite[Theorem 1]{gt}, the pressure function $P_{5}$ with respect to $T_{5}$ is a parabolic slide function of discontinuous type. Then, the graph of the Lyapunov spectrum of $T_{5}$ is similar to one of the graphs in the second row of Figure \ref{fig:my_label3}.  
\end{example}
By the definition of the Legendre transform of the topological pressure function and the Theorem \ref{THM 5.1}, we obtain the following corollary

\begin{corollary}\label{Cor53}
Let $T$ be an MRL map. For every $\alpha\in Dom(L)$  $$F(\alpha)=\alpha L(\alpha).$$
\end{corollary}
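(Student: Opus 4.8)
The plan is to derive Corollary \ref{Cor53} directly from Theorem \ref{THM 5.1} together with Proposition \ref{Lemma 1}, treating the nonzero case and the case $\alpha = 0$ separately. For nonzero $\alpha \in Dom(L)$, Theorem \ref{THM 5.1} gives $L(\alpha) = \frac{1}{\alpha}\inf_{t\in\R}\{P(t)+t\alpha\}$, and by Definition \ref{Legendre transform } the right-hand infimum is exactly $F(\alpha)$; multiplying both sides by $\alpha$ yields $F(\alpha) = \alpha L(\alpha)$ immediately. So the only content is the case $\alpha = 0$, and there the claim reduces to showing $F(0) = 0$.

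For the $\alpha = 0$ case, first I would recall from Theorem \ref{THM 5.1} that $0 \in Dom(L)$ forces $L(0) = \lim_{\alpha\to 0^+}L(\alpha)$. Then $\alpha L(\alpha) \to 0$ as $\alpha \to 0^+$, since $L$ is bounded above by $\text{Dim}_H(\Lambda) = d < \infty$ (this bound is used throughout the proof of Theorem \ref{THM 5.1}). Combining this with the nonzero identity $F(\alpha) = \alpha L(\alpha)$ already established, we get $\lim_{\alpha\to 0^+}F(\alpha) = 0$. It then remains to check that $F$ is continuous at $0$, or more directly that $F(0) = 0$. Here I would invoke Proposition \ref{Lemma 1} applied to the slide function $f = P$: since $P$ is a slide function (Theorem \ref{Thm 1}) with a zero at $t = d = \text{Dim}_H(\Lambda)$ and $P'(t) \to 0^-$ is possible, the value $F(0) = \inf_{t\in\R}P(t)$; because $P$ is non-increasing and $P(t) = 0$ for $t \geq d$ (parabolic case) or $P(t) \to 0$ as $t \to \infty$ (non-parabolic case), this infimum is $0$. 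In all cases $F(0) = 0 = 0\cdot L(0)$.

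Alternatively, and perhaps more cleanly, I would simply note that $F(0) = \inf_{t\in\R}P(t) = 0$ follows directly from the structure of slide functions: Proposition \ref{Lemma 1} with $-\alpha = 0$ falls into the case "$f$ is a parabolic slide function and $b_f \leq -\alpha \leq 0$" giving $F(0) = \alpha d = 0$, or the corresponding non-parabolic case giving $F(0) = \lim_{t\to\infty}P(t) = 0$. Either way $F(0) = 0$, and since $0\cdot L(0) = 0$ as well (using that $L(0)$ is finite), the identity $F(\alpha) = \alpha L(\alpha)$ holds also at $\alpha = 0$.

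The argument is essentially a bookkeeping exercise, so I do not anticipate a genuine obstacle; the only point requiring a moment's care is confirming that the $\alpha = 0$ case is not vacuous-but-delicate, i.e. ensuring that when $0 \in Dom(L)$ one indeed has $L(0)$ finite and $F(0) = 0$ so that both sides of the claimed identity vanish. This is handled by the finiteness of $\text{Dim}_H(\Lambda)$ together with the explicit evaluation of $F(0)$ from Proposition \ref{Lemma 1}.
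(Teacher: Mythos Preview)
Your proof is correct and follows the paper's approach, which derives the corollary directly from Theorem \ref{THM 5.1} together with the definition of the Legendre transform. One small caveat: your assertion $\lim_{t\to\infty}P(t)=0$ in the non-parabolic branch is not true in general (a non-parabolic slide function can tend to $-\infty$), but this branch is vacuous here since $0\in Dom(L)$ occurs only when $T$ has a parabolic fixed point, so your parabolic computation $F(0)=0$ already covers every actual case.
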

\begin{proof}\textit{of theorem \ref{PrincipalT}.} By Theorem \ref{Thm 1}, the topological pressure function $P$ is a slide function with $d=\text{Dim}_{H}(\Lambda)$. Thus, using both Corollaries \ref{Lemma2} and \ref{Cor53}, we obtain $$L(\alpha)=\dfrac{1}{\alpha}F(\alpha)=Ns_{P}(\alpha).$$  \end{proof}
\begin{remark}\label{GeoRemark}
Geometrically, the theorem says that for any support line tangent to the graph of $P$ with slope $-\alpha$ its intersections with the coordinate axes $x$ and $y$ are $L(\alpha)$ and $F(\alpha)$ respectively.
\end{remark}
    This same result holds for parabolic dynamical systems with finitely many branches since an analogous result to Theorem \ref{THM 5.1} is shown in \cite{gr}. We can prove it analogously to theorem \ref{PrincipalT} by defining the S-Newton-Raphson map for $P$, i.e., $Ns_{P}(\alpha)=N_{P}\circ (P')^{-1}(\alpha)$ whenever $-\alpha<0$ is in the image of $P'$, or $Ns_{P}(\alpha)=\text{Dim}_{H}(\Lambda)$ whenever $T$ has parabolic fixed points and $\lim_{t\rightarrow \text{Dim}_{H}(\Lambda)^{-}}P'(t)\leq -\alpha< 0$. For example, Theorem \ref{PrincipalT} holds for the Mannevile Pomeau map.

\section*{Acknowledments}
I want to thank my Ph.D. advisor Godofredo Iommi for suggesting this problem, for his patience, and for numerous helpful remarks. Also, thank the referees for the suggestions in the reviewed version of this document. This research was supported by ANID Doctorado Nacional 21210037.


\begin{thebibliography}{CHMW}

\bibitem[AF]{af} Adler, R. and Flatto, L. \emph{The backward continued fraction map and geodesic flow.} Ergodic Theory and Dynamical Systems. 4 (1984), no. 4, 487--492.

\bibitem[CI]{ci} Cipriano, I. and Iommi, G. \emph{Time change for flows and thermodynamic formalism.} Nonlinearity. 32 (2019), no. 8,  2848–2874.
\bibitem[Fa]{fa} Falconer, K. \emph{Fractal geometry: mathematical foundations and applications.} John Wiley \& Sons (2004).


\bibitem[FLWW]{flww} Fan, A., Liao, L., Wang, B. and  Wu, J.  \emph{On Khintchine exponents and Lyapunov exponents of continued fractions.} Ergodic Theory Dynamical Systems. 29 (2009), no. 1, 73--109.

\bibitem[GT]{gt} Grabner, P. and Thuswaldner, J. \emph{Analytic continuation of a class of Dirichlet series.} Abhandlungen aus dem Mathematischen Seminar der Universit{\"a}t Hamburg. 66 (1996), no. 1, 281--287. 

\bibitem[GR1]{gr} Gelfert, K. and Rams, M. \emph{The Lyapunov spectrum of some parabolic systems.} Ergodic Theory and Dynamical Systems. 29 (2009), no. 3, 919--940.

\bibitem[GR2]{gr2} Gelfert, K. and Rams, M. \emph{Geometry of limit sets for expansive Markov systems.} Transactions of the American Mathematical Society. 361 (2009), no. 4, 2001--2020.

\bibitem[Io1]{io} Iommi, G. \emph{Multifractal analysis of the Lyapunov exponent for the backward continued fraction map.} Ergodic Theory and Dynamical Systems. 30 (2010), no. 1, 211--232.
\bibitem[Io2]{io2} Iommi, G.  \emph{The Lyapunov spectrum as the Newton method.} Physica A: Statistical Mechanics and its Applications. 391 (2012), no. 9, 2848--2852.

\bibitem[IJ]{ij1} Iommi, G. and Jordan, T. \emph{Multifractal analysis of Birkhoff averages for countable Markov maps.} Ergodic Theory and Dynamical Systems. 35 (2015), no. 8, 2559--2586.

\bibitem[IK]{ik} Iommi, G. and Kiwi, J. \emph{The Lyapunov spectrum is not always concave.}
Journal of Statistical Physics. 135 (2009), no. 3, 535--546. 


\bibitem[JPV]{jpv} Jenkinson, O., Pollicott, M. and  Vytnova, P. \emph{How many inflections are there in the Lyapunov spectrum?} Communications in mathematical physics. 386 (2021), no. 3, 1383--1411.



\bibitem[KMS]{KMS}Kesseb{\"o}hmer, M., Munday, S. and Stratmann, B. \emph{Strong renewal theorems and Lyapunov spectra for $\alpha$-Farey and $\alpha$-L{\"u}roth systems}. Ergodic Theory and Dynamical Systems. 32 (2012), no. 3, 989--1017.

\bibitem[KS]{Ks}Kesseb{\"o}hmer, M. and Stratmann, B. \emph{A multifractal analysis for Stern-Brocot intervals, continued fractions and Diophantine growth rates.} Journal für die reine und angewandte Mathematik(Crelles Journal). 605 (2007), 133-163.

\bibitem[M]{m} Ma, L. \emph{Counting the Lyapunov inflections in piecewise linear systems.} Nonlinearity. 34 (2021), no. 12, 8414--8442.

\bibitem[MU1]{MU} Mauldin, D. and Urba{\'n}ski, M.\emph{Dimensions and measures in infinite iterated function systems.} Proceedings of the London Mathematical Society. 3(1996), no. 1, 105--154.

\bibitem[Na]{na} Nakaishi, K. \emph{Multifractal formalism for some parabolic maps.} Ergodic theory and dynamical systems. 20 (2000), no. 3, 843--857.

\bibitem[Pe]{pe}Pesin, Y. \emph{Dimension theory in dynamical systems: contemporary views and applications.} University of Chicago Press. (2008).

\bibitem[PeW]{yh} Pesin, Y. and Weiss, H. \emph{A multifractal analysis of equilibrium measures for conformal expanding maps and Moran-like geometric constructions.} Journal of Statistical Physics. 86 (1997), no. 1, 33--275.
\bibitem[PU]{pu} Przytycki, F. and Urba{\'n}ski, M. \emph{Conformal fractals: ergodic theory methods.} Cambridge University Press. 371 (2010).

\bibitem[PoW]{mh} Pollicott, M. and Weiss, H. \emph{Multifractal analysis of Lyapunov exponent for continued fraction and Manneville--Pomeau transformations and applications to Diophantine approximation.} 
Communications in mathematical physics. 207 (1999), no. 1, 145--171.

\bibitem[PM]{pm} Pomeau, Y. and Manneville, P. \emph{Intermittent transition to turbulence in dissipative dynamical systems}. Communications in mathematical physics.  74 (1980), no. 2, 189-197.



\bibitem[Sa1]{sa} Sarig, O. \emph{Lecture notes on thermodynamic formalism for topological Markov shifts.} Penn State.  (2009).

\bibitem[Sa2]{sa2} Sarig, O. \emph{Thermodynamic formalism for countable Markov shifts.} Ergodic Theory and Dynamical System. 19 (1999), no. 6, 1565--1593.

\bibitem[Sa3]{sa3} Sarig, O. \emph{Existence of Gibbs measures for countable Markov shifts.} Proceedings of the American Mathematical Society. 131 (2003), no. 6, 1751--1758.

\bibitem[We]{we} Weiss, H. \emph{The Lyapunov spectrum for conformal expanding maps and axiom-A surface diffeomorphisms.} Journal of statistical physics. 95 (1999), no. 3, 615--632.
\bibitem[Wi]{wi} Wijsman, R. \emph{Convergence of sequences of convex sets, cones and functions.} Bulletin of the American Mathematical Society. 70 (1964), no. 1, 186--188.

\bibitem[Wi2]{wi2} Wijsman, R. \emph{Convergence of sequences of convex sets, cones and functions. II.} Transactions of the American Mathematical Society. 123 (1966), no. 1, 32--45.


%



\end{thebibliography}
\end{document}